\newcommand{\xqedhere}[1]{%
    \rlap{%
         \hbox to#1{%
           \hfil
           \llap{%
               \ensuremath{\square}
           }%
       }%
   }%
}
\def\pasdegrille{\let\grille = \pasgrille}
\def\aat#1#2#3{
\divide \dimen1 by 48 \dimen3=\dimen1 \multiply \dimen1 by #1
\advance \dimen1 by -\dimen3 \divide \dimen1 by 101 \multiply
\dimen1 by 100 \divide \dimen2 by \count11 \multiply \dimen2 by #2
\setbox0=\hbox{#3}\ht0=0pt\dp0=0pt
  \rlap{\kern\dimen1 \vbox to0pt{\kern-\dimen2\box0\vss}}\dimen1= \wd1
\dimen2=\ht1}
\def\pasgrille{
\count12= \dimen1 \divide \count12 by 50 \divide \dimen2 by
\count12 \count11 =\dimen2 \ \divide \dimen1 by 48
\setlength{\unitlength}{\dimen1} \smash{\rlap{\ }} \dimen1= \wd1
\dimen2=\ht1 }
\def\grille{
\count12= \dimen1 \divide \count12 by 50 \divide \dimen2 by
\count12 \count11 =\dimen2 \ \divide \dimen1 by 48
\setlength{\unitlength}{\dimen1}
\smash{\rlap{\graphpaper[1](0,0)(50, \count11)}} \dimen1= \wd1
\dimen2=\ht1 }
\newcommand{\ud}{\,\mathrm{d}}
\newcommand{\be}{\begin{equation}}
\newcommand{\ee}{\end{equation}}
\newcommand{\ra}{\rangle}
\newcommand{\la}{\langle}
\newcommand{\CC}{{\mathbb C}}
\newcommand{\NN}{{\mathbb N}}
\newcommand{\RR}{{\mathbb R}}
\newcommand{\vol}{\operatorname{vol}}
\renewcommand{\Re}{\mathop{\rm Re}\nolimits}
\renewcommand{\Im}{\mathop{\rm Im}\nolimits}
\theoremstyle{plain}
\newtheorem{thm}{Theorem}
\newtheorem{prop}{Proposition}[section]
\newtheorem{cor}[prop]{Corollary}
\newtheorem{lem}[prop]{Lemma}
\theoremstyle{definition}
\numberwithin{equation}{section}
\def\squarebox#1{\hbox to #1{\hfill\vbox to #1{\vfill}}} 
\newcommand{\norm}[1]{\Vert#1\Vert}
\title[Damped wave equation]{A remark on the logarithmic decay of the damped wave and Schr{\"o}dinger equations on a compact Riemannian manifold}
\author[N. Burq]{Nicolas Burq}
\address{Universit{\'e} Paris Sud/ Université Paris-Saclay, Math{\'e}matiques, B{\^a}t 307, 91405  Orsay Cedex, France,  UMR 8628 du CNRS and Institut Universitaire de France}
\email{Nicolas.burq@math.u-psud.fr}
\author[I. Moyano]{Iv\'an Moyano}
\address{Laboratoire Jean-Alexandre Dieudonné, Universit\'e C\^ote-d'Azur, 28 Parc Valrose, 06028 Nice cedex, France }
\email{imoyano@unice.fr}
\begin{document}    

\begin{abstract}
In this paper we consider a compact Riemannian manifold $(M,g)$ of class $C^1 \cap W^{2,\infty}$ and the damped wave or Schr\"odinger equations on $M$, under the action of a damping function $a = a(x)$. We establish the following fact: if the measure of the set $ \left\{ x \in M; \, a(x) \not = 0 \right\} $ is strictly positive, then the decay in time of the associated energy is at least logarithmic.

\end{abstract}   

\maketitle   

\section{Introduction}  
\label{sec:Introduction}

Consider a compact Riemannian manifold $(M,g)$ of class $C^1 \cap W^{2,\infty}$, possibly with boundaries $\partial M$, endowed with a Lipschitz metric $g$. Denote $\ud_g x$ (or simply $dx$) the volume element in $M$ associated to the metric $g\in C^0 \cap W^{1, \infty}$ and write $\vol_g$ the associated volume on $M$. Let $\Delta$ be the Laplace-Beltrami operator in $(M,g)$. Recall that in local coordinates we may write
\be
\Delta = \frac{1}{\sqrt{\det g}} \partial_i \left( \sqrt{\det g} g^{ij} \partial_j \right).
\label{eq:LaplaceBeltrami}
\ee In this note we are interested in the evolution of respectively the wave equation and the Schr\"odinger equation under the influence of a damping term localised via  a function $0\leq a(x), a \in L^\infty (M) $ non trivial ($\int_M a(x) dx >0$) which consequently may be supported  in a small subset of~$M$, namely $E$. We shall briefly recall these models. \par

The damped wave equation in $M$ under the damping $a\partial_t u$ corresponds to the initial value problem   
\begin{equation}
\left\{ \begin{array}{ll}
\partial_t^2 u - \Delta u + a(x) \partial_t u = 0, & \RR_+ \times M, \\
(u, \partial_t u) |_{t=0} = (u_0,u_1), & M,
\end{array} 
\right.
\label{eq:waves}
\end{equation} where $(u_0,u_1)$ is a given initial condition in the natural energy space $\mathcal{H} = H^1(M) \times L^2(M)$.  If $\partial M \not = \emptyset$, we impose the boundary conditions
\begin{equation}
u \mid_{\partial M} =0 \text{ (Dirichlet condition) } \quad \textrm{or} \quad   \partial_\nu u \mid_{\partial M} =0 \text{ (Neumann condition)}.
\label{eq:waves BC}
\end{equation} The energy associated to (\ref{eq:waves}) is as usual
\begin{equation}
\mathcal{E}_{w}(t,u_0,u_1) = \int_M |\partial_t u(t)|^2 \ud x + \int_M |\nabla_x u(t)|^2 \ud x, \qquad t \geq 0,
\label{eq:energy} 
\end{equation} defined globally as  $u \in C^0(\RR_+;H^1(M)) \cap C^1(\RR_+;L^2(M))$. \par

The second model we are interested in, is the initial value problem for the Schr\"odinger equation under the action of the damping $a = a(x)$, i.e.,
\begin{equation}
\left\{ \begin{array}{ll}
i \partial_t \psi + \Delta \psi + i a(x) \psi = 0, & \RR_+ \times M, \\
\psi |_{t=0} = \psi_0, & M,
\end{array} 
\right.
\label{eq:schrodinger}
\end{equation} for a given $\psi_0 \in L^2(M;\CC)$. Again if $\partial M \not = \emptyset$, we impose the boundary conditions (\ref{eq:waves BC}). The energy associated to (\ref{eq:schrodinger}) is 
\begin{equation}
\mathcal{E}_S(t,\psi_0) = \int_M |\psi(t)|^2 \ud x.
\label{eq:energy Schrodinger} 
\end{equation} In this note we prove that if $E \subset M$ is any measurable set with $\vol_g(E) > 0$, the energy functionals $\mathcal{E}_w$ and $\mathcal{E}_S$ decay at least logarithmically in time. This is the content of Theorems \ref{thm:decay} and \ref{thm:decay schrodinger} below.

\subsection{Main results}
Since we assume $a\geq 0, \int_M a(x) dx >0$, we deduce that there exists $n>0$ such that the set 
$$ F_n = \{ x \in M; a(x) > \frac 1 n\}$$ 
has positive measure. As a consequence, with $\alpha = \frac 1 n, \beta = \| a\|_{L^\infty}$, $F = F_n$, 
We get that the damping function $a = a(x)$ satisfies 
\begin{equation}
\alpha 1_F(x) \leq a(x) \leq \beta, \qquad \text{ for almost all } x \in M,
\label{eq:hypothesis damping}
\end{equation} with $F \subset M$ of positive measure, not necessarily open. Our main result for the wave equation is the following.

\begin{thm}
Let $a\geq 0, \int_M a(x) dx >0$. Then,  there exists a constant $C=C(F)>0$ such that for every 
\begin{equation}
(u_0,u_1) \in \begin{cases} \left(H^{2}(M)\cap H^1_0(M) \right) \times H^1_0(M), \qquad  \, \text{ with Dirichlet boundary conditions},\\
H^{2}(M) \times H^1(M), \qquad \qquad  \qquad \qquad   \text{otherwise},
\end{cases}
\end{equation} the solution to the associated damped wave equation (\ref{eq:waves}) satisfies
 \begin{equation}
\mathcal{E}_w(t,u_0,u_1) \leq \frac{C}{\log(2 + t)^{2}} \left( \| u_0 \|_{H^{2}(M)}^2  +  \| u_1 \|_{H^{1}(M)}^2 \right), \qquad t \geq 0,
\label{eq:decay log}
\end{equation} where $\mathcal{E}_w$ is the energy defined in (\ref{eq:energy}).
\label{thm:decay}
\end{thm}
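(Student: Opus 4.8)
The plan is to recast logarithmic energy decay as a high-frequency resolvent estimate on the imaginary axis, and then to prove that estimate by combining the dissipation produced by $a$ with a quantitative unique continuation inequality tailored to the low regularity of $g$ and to the merely measurable set $F$. First I would rewrite (\ref{eq:waves}) as a first-order system $\partial_t U = \mathcal{A}U$ for $U=(u,\partial_t u)$ on the energy space $\mathcal{H}=H^1(M)\times L^2(M)$, with
\[
\mathcal{A} = \begin{pmatrix} 0 & \Id \\ \Delta & -a \end{pmatrix},
\]
the domain $D(\mathcal{A})$ being exactly the space of admissible data in the statement (with the prescribed boundary conditions). Since $\Re\langle \mathcal{A}U,U\rangle_{\mathcal{H}} = -\int_M a\,|\partial_t u|^2 \le 0$, the operator $\mathcal{A}$ generates a contraction semigroup and $\mathcal{E}_w(t,u_0,u_1)=\|e^{t\mathcal{A}}(u_0,u_1)\|^2$ in the energy norm (in the Neumann or boundaryless case one first quotients out the constant component, on which $\mathcal{E}_w$ vanishes and which is conserved). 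By the Hilbert-space characterisation of logarithmic decay of semigroups (Batty–Duyckaerts, Borichev–Tomilov), it then suffices to show that $i\RR$ lies in the resolvent set of $\mathcal{A}$ together with the bound
\[
\|(\mathcal{A}-i\lambda)^{-1}\|_{\mathcal{H}\to\mathcal{H}} \le C e^{C|\lambda|}, \qquad |\lambda|\ge 1 .
\]
The square in (\ref{eq:decay log}) is precisely the passage from the semigroup estimate on $D(\mathcal{A})\hookrightarrow H^2\times H^1$ to the energy.

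Next I would reduce this to a stationary estimate. Solving $(\mathcal{A}-i\lambda)(u,v)=(f,g)$ eliminates $v=i\lambda u+f$ and leaves the Helmholtz-type equation $P_\lambda u := (\Delta+\lambda^2-i\lambda a)u = g+af+i\lambda f$, so the resolvent bound follows from an estimate of the form
\[
\|u\|_{L^2(M)} + |\lambda|^{-1}\|\nabla u\|_{L^2(M)} \le \frac{Ce^{C|\lambda|}}{|\lambda|}\,\|P_\lambda u\|_{L^2(M)} .
\]
The dissipation supplies the first ingredient: pairing $P_\lambda u$ with $u$ and taking imaginary parts gives $|\lambda|\int_M a\,|u|^2 = |\Im\langle P_\lambda u,u\rangle|$, whence by (\ref{eq:hypothesis damping})
\[
\|u\|_{L^2(F)}^2 \le \frac{1}{\alpha|\lambda|}\,\|P_\lambda u\|_{L^2}\,\|u\|_{L^2} .
\]
This localises the smallness of $u$ to the damping set $F$, with a gain of one power of $|\lambda|$.

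The crucial and hardest ingredient is the quantitative unique continuation (interpolation) inequality
\[
\|u\|_{H^1(M)} \le C e^{C|\lambda|}\bigl(\|P_\lambda u\|_{L^2(M)} + \|u\|_{L^2(F)}\bigr),
\]
with the exponential constant explicit in $|\lambda|$. I would derive it from a global Carleman estimate for $\Delta+\lambda^2$, treating $-i\lambda a\,u$ as a lower-order term absorbed by the large parameter. Here the hypothesis $g\in W^{2,\infty}$ is exactly what is needed: Carleman and unique continuation estimates for second-order elliptic operators remain valid down to $C^{1,1}$ coefficients, which is the natural threshold. The genuinely new difficulty, and the main obstacle, is that $F$ has only positive measure and may contain no open set, whereas the observation term in a standard Carleman estimate is supported on an open subset. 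I would bridge this with a propagation-of-smallness argument: from the elliptic equation one obtains doubling and three-balls inequalities for $u$, which allow one to control the $L^2$ norm of $u$ on a genuine ball $B$ by its $L^2$ norm on the positive-measure set $F$, thereby converting $\|u\|_{L^2(F)}$ into $\|u\|_{L^2(B)}$; the Carleman estimate then propagates this control to all of $M$. Tracking the constants along this chain, so that the cumulative loss stays of the form $e^{C|\lambda|}$, is the delicate point.

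Finally I would combine the two ingredients. Inserting the dissipation bound into the interpolation inequality and closing the resulting inequality $\|u\|_{H^1}\le c + b\,\|u\|_{H^1}^{1/2}$ by the standard quadratic-absorption argument yields the stationary estimate, hence the resolvent bound, hence (\ref{eq:decay log}). The same reasoning disposes of the spectral hypothesis on $i\RR$: an eigenfunction with $(\mathcal{A}-i\lambda)U=0$ would satisfy $P_\lambda u=0$ and, by the dissipation identity, $u|_F=0$, so unique continuation forces $u\equiv 0$; the absence of approximate spectrum on $i\RR$ follows from the resolvent bound itself. I expect essentially no difficulty in the semigroup reduction and the dissipation step; the entire weight of the argument lies in establishing the interpolation inequality for a rough metric and a merely measurable damping set simultaneously, while keeping the dependence on $|\lambda|$ exponential.
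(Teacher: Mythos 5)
Your overall architecture coincides with the paper's: first-order system on $H^1\times L^2$, the Batty--Duyckaerts/Burq criterion (Theorem \ref{thm:Burq}) reducing logarithmic decay to an $e^{C|\tau|}$ resolvent bound on the imaginary axis, elimination of $v$ to obtain a Helmholtz equation, the dissipation identity giving $\|u\|_{L^2(F)}^2\lesssim (\alpha|\tau|)^{-1}\|P_\tau u\|_{L^2}\|u\|_{L^2}$ via (\ref{eq:hypothesis damping}), quadratic absorption to close, the quotient construction for Neumann conditions, and unique continuation from a positive-measure set to exclude imaginary spectrum. All of this matches Sections 2--4 of the paper.

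The gap is precisely at what you call the ``crucial and hardest ingredient'', the inequality $\|u\|_{H^1(M)}\le Ce^{C|\lambda|}\left(\|P_\lambda u\|_{L^2(M)}+\|u\|_{L^2(F)}\right)$, which you propose to prove by a global Carleman estimate combined with doubling/three-balls propagation of smallness. This is not how the paper proceeds, and the route as sketched runs into a concrete obstruction: Carleman-based interpolation with observation on a \emph{measurable} set does not exist as such, and the Logunov--Malinnikova Remez-type inequalities you invoke to pass from $F$ to a ball apply to solutions of \emph{homogeneous} divergence-form equations, whereas your $u$ solves an inhomogeneous equation with source $g+(a+i\tau)f$ and the rough term $i\tau a u$ ($a$ only $L^\infty$). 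You cannot remove the source by subtracting a local particular solution on a fixed ball, since $\Delta+\lambda^2$ has Dirichlet resonances at every scale $\gtrsim 1/|\lambda|$; working instead at scale $1/|\lambda|$ and chaining three-ball inequalities across dyadic scales degrades the interpolation exponent to a power $|\lambda|^{-c}$, and converting the resulting multiplicative inequality to additive form produces constants far worse than $e^{C|\lambda|}$, after which Theorem \ref{thm:Burq} no longer yields the $(\log(2+t))^{-2}$ rate. The paper's device for exactly this difficulty is Proposition \ref{prop:Helmholtz}: split $u$ spectrally into an elliptic part ($|\Delta+\mu^2|>1$), estimated by $\|S\|_{L^2}$ by pure spectral calculus, and a hyperbolic part, which is a finite spectral cluster $\sum_{\lambda_k\lesssim|\mu|}u_ke_k$; the cluster is then controlled on $F$ by the spectral inequality of Theorem \ref{thm:spectral inequality}, i.e.\ the main theorem of \cite{BM}, quoted as a black box. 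In other words, the source is eliminated \emph{before} any propagation-of-smallness technology is invoked, and the propagation of smallness actually needed (for spectral clusters, from merely measurable sets, on Lipschitz metrics --- which in \cite{BM} requires the codimension-one version of the Logunov--Malinnikova theory, not just doubling and three balls) is cited, not reproved.

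Two smaller points. First, your regularity claim is off: the metric in the paper is only Lipschitz ($g\in C^0\cap W^{1,\infty}$; it is the manifold that is $C^1\cap W^{2,\infty}$), so ``Carleman estimates down to $C^{1,1}$ coefficients'' is not the relevant threshold, and this is another reason the quotable result of \cite{BM} is the natural tool. Second, your disposal of the spectral condition for $|\lambda|<1$ needs either the compactness of the resolvent (discreteness of spectrum) or a compactness argument as in the paper's Proposition \ref{prop: low freq}; ruling out eigenvalues alone does not in general rule out approximate spectrum, and the resolvent bound you appeal to was only established for $|\lambda|\ge 1$. These are repairable; the missing proof of the interpolation inequality, which you yourself flag as carrying the entire weight of the argument, is not --- as written, the proposal assumes the hardest step.
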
 See
Section \ref{sec:Notation} for notation. In the case of the Schr\"odinger equation we obtain the next analogous result.

\begin{thm}\label{energy decay Schrodinger}
Let $a\geq 0, \int_M a(x) dx >0$. Then, there exists a constant $C=C(F)>0$ such that for every 
 \begin{equation}
\psi_0 \in \begin{cases} H^{2}(M)\cap H^1_0(M), \qquad  \text{with Dirichlet boundary conditions},\\
H^{2}(M), \qquad  \qquad  \qquad     \text{ otherwise},
\end{cases}
\end{equation}the solution to the associated Schr\"odinger equation (\ref{eq:schrodinger}) satisfies

 \begin{equation}
\mathcal{E}_S(t,\psi_0) \leq \frac{C}{\log(2 + t)^{4}}  \| \psi_0 \|_{H^{2}(M)}^2, \qquad t \geq 0,
\label{eq:decay log schrodinger}
\end{equation} where $\mathcal{E}_S$ is the energy defined in (\ref{eq:energy Schrodinger}).
\label{thm:decay schrodinger}
\end{thm}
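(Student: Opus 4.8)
The plan is to treat both equations by the now-standard resolvent route and to concentrate all the analytic difficulty in a single stationary estimate. First I would write the damped wave equation as a first order system $\partial_t U = \mathcal A U$, $U = (u,\partial_t u)$, on $\mathcal H = H^1(M)\times L^2(M)$, with
\[
\mathcal A = \begin{pmatrix} 0 & \mathrm{Id} \\ \Delta & -a \end{pmatrix},
\]
and the Schr\"odinger equation as $\partial_t\psi = (i\Delta - a)\psi$ on $L^2(M)$, using in each case the self-adjoint realization of $\Delta$ determined by the prescribed boundary condition. In both cases the generator is maximal dissipative, hence generates a contraction semigroup with spectrum in the closed left half-plane. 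Unique continuation from the positive-measure set $\{a>0\}$ rules out spectrum on the imaginary axis, so the resolvent is well defined there. I would then invoke the quantitative resolvent-to-decay theory (in the spirit of Lebeau and Batty--Duyckaerts): a bound $\|(\mathcal A - i\tau)^{-1}\|_{\mathcal H\to\mathcal H}\le C e^{C|\tau|}$ on the imaginary axis yields energy decay $\mathcal E_w \lesssim (\log(2+t))^{-2}$ for data in the domain. The extra factor of two in the Schr\"odinger exponent is a scaling effect: for $i\Delta - a$ the spectral parameter $\tau$ plays the role of $\tau^2$ for the wave operator, so the same stationary estimate produces a resolvent bound $e^{C\sqrt{|\tau|}}$ and hence decay $(\log(2+t))^{-4}$.

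Second, I would reduce the resolvent bound to a stationary inequality. Solving $(\mathcal A - i\tau)(u,v) = (f,g)$ gives $v = i\tau u + f$ and reduces everything to inverting the damped Helmholtz operator $P_\tau = \Delta + \tau^2 - i\tau a$. Pairing $P_\tau u$ with $u$ and separating real and imaginary parts controls $\|\nabla u\|_{L^2}$ and $\|a^{1/2}u\|_{L^2}^2 \le |\tau|^{-1}\|P_\tau u\|\,\|u\|$ in terms of $\|u\|_{L^2}$ and the data. Since $a \ge \alpha \mathbf 1_F$, it then suffices to prove the quantitative unique continuation estimate
\[
\|u\|_{L^2(M)} \le C e^{C|\tau|}\Big( \|(\Delta + \tau^2)u\|_{L^2(M)} + \|u\|_{L^2(F)} \Big),
\]
after which a routine absorption argument closes the resolvent bound.

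Third, for the stationary estimate I would use the Lebeau--Robbiano augmentation: set up the genuinely elliptic operator $\partial_s^2 + \Delta$ on $M\times(-S,S)$, lift $u$ to $M\times(-S,S)$, and apply a global Carleman/interpolation inequality to recover $\|u\|_{L^2(M)}$ from the source $\|(\Delta+\tau^2)u\|$ together with the trace on a sub-cylinder, tracking the large-parameter dependence so as to obtain the factor $e^{C|\tau|}$. This first yields the estimate with $F$ replaced by an open set $\omega$, the Carleman weights being chosen compatibly with the limited regularity $g\in C^0\cap W^{1,\infty}$ of the metric. To replace $\omega$ by the positive-measure set $F$, I would invoke a propagation-of-smallness estimate for elliptic solutions observed on sets of positive measure (three-ball/doubling inequalities iterated along a chain reaching $F$, in the spirit of Logunov--Malinnikova), upgrading control by $\|u\|_{L^2(\omega)}$ to control by $\|u\|_{L^2(F)}$ at the cost of worsening, but still exponential-in-$\tau$, constants.

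The hard part will be exactly this last passage from an open observation set to a set of merely positive measure, in the low-regularity geometric setting. Carleman estimates and strong unique continuation are delicate near the Lipschitz threshold, and the quantitative propagation of smallness from positive-measure sets has to be carried out with explicit control of all constants in the spectral parameter $\tau$, so that the final bound is genuinely $e^{C|\tau|}$; any super-exponential loss here would degrade the decay below logarithmic. Everything else --- the semigroup reduction, the energy identities, and the absorption of polynomial factors --- is routine once this estimate is in hand.
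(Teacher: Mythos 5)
Your overall architecture is the paper's: the generator $A=i\Delta-a$ on $L^2(M)$, resolvent bounds on the imaginary axis fed into the Batty--Duyckaerts/Burq theorem (Theorem \ref{thm:Burq}), the scaling remark that for Schr\"odinger the spectral parameter plays the role of $\mu^2$ so that an $e^{C\mu}$ stationary bound becomes $e^{C\sqrt{|\tau|}}$ and hence gives the exponent $4$, and the absorption of $\|\sqrt{a}\,u\|_{L^2}$ through the imaginary part of the energy identity. The stationary inequality you isolate, $\|u\|_{L^2(M)}\le Ce^{C|\mu|}\bigl(\|(\Delta+\mu^2)u\|_{L^2(M)}+\|u\|_{L^2(F)}\bigr)$, is exactly Proposition \ref{prop:Helmholtz}. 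The gap lies in how you propose to prove it.

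First, ``three-ball/doubling inequalities iterated along a chain reaching $F$'' cannot reach a set of positive measure with empty interior: chain arguments propagate smallness between balls, and passing from open sets to merely measurable sets is precisely the deep content of Logunov--Malinnikova, not a routine iteration of doubling. Second, even taking Logunov--Malinnikova \cite{LMpropSmallness} as a black box, their propagation of smallness concerns homogeneous divergence-form equations $\div(A\nabla v)=0$, while your $u$ solves an equation with a large potential $\mu^2$ and a source $S$. Killing the potential by the Lebeau--Robbiano lift $v(x,s)=u(x)\cosh(\mu s)$ leaves a nonzero right-hand side $S\cosh(\mu s)$, and moves the observation set to $F\times\{0\}$, which has codimension one in the cylinder; propagation of smallness for solutions observed on codimension-one sets is false in general (the harmonic function $v(x,s)=s$ vanishes on the whole hyperplane), so one must invoke the Cauchy-data/gradient version of Logunov--Malinnikova, exploiting $\partial_s v|_{s=0}=0$, and still deal with the source without circularity. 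The paper's resolution is a modularization you are missing: the positive-measure observation is applied only to finite spectral projections $\phi=\sum_{\lambda_k\le\Lambda}u_k e_k$, for which the lift is a genuine homogeneous solution with vanishing normal derivative; this is the spectral inequality of Theorem \ref{thm:spectral inequality}, quoted from \cite{BM}. Proposition \ref{prop:Helmholtz} then follows by splitting $u$ into ``hyperbolic'' frequencies $|\Delta+\mu^2|\le 1$ (controlled by the spectral inequality with $\Lambda\sim|\mu|$) and ``elliptic'' frequencies (controlled directly by $\|S\|_{L^2}$ via the functional calculus), so the source never enters the unique-continuation step. If you adopt this splitting, or cite \cite{BM} as the paper does, your argument closes; as written, the key estimate is unproved. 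A minor omission: for $\tau\ge 0$ no exponential bound is needed, but with Neumann conditions $\Delta-\tau+ia$ is not coercive on constants at $\tau=0$, and the paper uses the damping-Poincar\'e inequality (\ref{eq:Poincare}) there; this is routine but must be addressed.
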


The strategy of proof combines the spectral inequalities obtained in \cite{BM} (see Theorem \ref{thm:spectral inequality} below) with a sharp characterisation of the logarithmic decay of energy from \cite{BurqActa,BattyDuyckaerts} (see Theorem \ref{thm:Burq} in Section \ref{sec:A criterion for logarithmic decay}). We give some details concerning these results in Section \ref{sec:Some tools}.

\subsection{Notation and setting}
\label{sec:Notation}

As mentioned above, the volume induced by $g$ on $M$ is defined as 
\begin{equation*}
\vol_g(A) = \int_M 1_A(x) \ud_g x, 
\end{equation*} for every Borel set $A \subset M$. In the case of the Euclidean flat space $\RR^d$, me simply write $|A|$ for the $d-$dimensional Lebesgue measure of a given Borel set $A \subset \RR^d$. In both cases we denote $B(x,r)$ the ball of radius $r>0$ centred at a point $x$.  

As $M$ is compact, the Laplace-Beltrami operator on $M$ defined in (\ref{eq:LaplaceBeltrami}) has compact resolvent. Let $(e_k)_{k\in \NN}$ be the family of $L^2$-normalised eigenfunctions of $- \Delta$, with eigenvalues $\lambda_k^2 \rightarrow + \infty$ and satisfying 
$$ -\Delta e_k = \lambda_k ^2 e_k, \qquad e_k \mid_{\partial M} =0 \text{ (Dirichlet condition) or } \partial_\nu e_k \mid_{\partial M} =0 \text{ (Neumann condition)}.$$

Recall that $(e_k)_{k\in \NN}$ is a Hilbert basis of $L^2(M)$ endowed with the usual inner product $\la \cdot,  \cdot \ra$. Moreover, the usual Sobolev norms on $M$ can be defined using the spectral basis $(e_k)_{k\in \NN}$ as follows:
\be
\| f \|_{H^s(M)}^2 = \sum_{k \in \NN} (1 + \lambda_k^2) |\la f, e_k \ra|^2, \qquad f \in H^s(M), 
\ee for every $s \in \RR$.

\subsection{Previous work}

\subsubsection{Decay of damped waves} The study of the decay rates for (\ref{eq:waves}) has been addressed by the seminal works \cite{BLR} and \cite{Lebeau96}. These work establish an intimate relation between the rate decay of the energy and the support of the damping function. Under the geometric control condition of \cite{BLR} one can expect an exponential decay, as shown for instance in \cite{BLR,Lebeau96,BurqJoly}. On the other hand, when the support of $a$ does not satisfy a geometric control condition, the decay rate of the associated damped wave equation may be slower than exponential. We can find examples in the literature of polynomial decay \cite{AnantharamanLeautaud,Phung} or even logarithmic \cite{Lebeau96,LeRo95, BurqJoly, LaurentLeautaud}. \par 

Under some hypothesis on the geometry of the manifold, such as the assumption that the manifold is compact and hyperbolic (negative curvature), it is possible to expect exponential decay in some (positive) Sobolev spaces as soon as $a$ is smooth and non zero (cf. \cite{Jin}). \par 

In this paper we establish the following fact: if $| \left\{ x \in M; \, a(x) \not = 0 \right\}| > 0 $, then the decay is at least logarithmic. We do not make any assumption on the curvature of the manifold.

\subsubsection{Spectral inequalities}
\label{sec:specral inequalities}

In the framework described in Section \ref{sec:Notation}, Given a small subset $E\subset M$ (of positive Lebesgue measure or at least not too small), we have studied in \cite{BM} how $L^p$ norms of the restrictions to $E$ of arbitrary finite linear combinations of the form
$$\phi= \sum_{\lambda_k \leq \Lambda} u_k e_k (x)$$
can dominate Sobolev norms of $\phi$ on the whole $M$. Our result \cite[Thm 2]{BM} is the following.

\begin{thm}\label{spectral}
Let $(M,g)$ be a Riemannian manifold of class $C^1 \cap W^{2,\infty}$, possibly with boundaries $\partial M$. There exists $\delta\in (0,1)$ such that for any $m>0$, there exist $C, D>0$ such that for any $\omega \subset M$ with $ \vol_g(\omega) \geq m$ and for any $\Lambda >0$,   
we have
\begin{equation}\label{borne}
\phi= \sum_{\lambda_k \leq \Lambda} u_k e_k (x) \Rightarrow \| \phi\|_{L^2(M)} \leq C e^{D \Lambda} \| \phi 1_{\omega}\|_{L^2(M)}.
\end{equation}
\label{thm:spectral inequality}
\end{thm}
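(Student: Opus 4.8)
The plan is to reduce the spectral estimate to a quantitative unique continuation statement for an elliptic equation in one extra variable, and then to run a propagation of smallness argument robust under the Lipschitz regularity of $g$. First I would attach to $\phi=\sum_{\lambda_k\le\Lambda}u_k e_k$ the harmonic-type extension
\begin{equation}
w(x,s)=\sum_{\lambda_k\le\Lambda}u_k\cosh(\lambda_k s)\,e_k(x),\qquad (x,s)\in M\times(-1,1),
\end{equation}
which, because $-\Delta e_k=\lambda_k^2 e_k$, solves the elliptic equation $(\partial_s^2+\Delta)w=0$ with Cauchy data $w|_{s=0}=\phi$, $\partial_s w|_{s=0}=0$, and inherits the boundary conditions (\ref{eq:waves BC}) on $\partial M\times(-1,1)$. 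Since $\cosh\ge 1$ one has the lower bound $\|\phi\|_{L^2(M)}\le\|w(\cdot,s)\|_{L^2(M)}$ for every $s$, while the frequency cutoff $\lambda_k\le\Lambda$ gives the matching upper bound $\|w\|_{L^2(M\times(-1,1))}\le Ce^{\Lambda}\|\phi\|_{L^2(M)}$; on the initial slice one simply has $\|\phi 1_\omega\|_{L^2(M)}=\|w|_{s=0}\|_{L^2(\omega)}$. Thus (\ref{borne}) will follow once I control $\|w\|_{L^2}$ on the full slab by the trace of $w$ on $\omega\times\{0\}$, conceding a multiplicative loss $e^{C\Lambda}$. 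The point of the $\cosh$ weights is that the ``frequency'' (doubling index) of $w$ is of size $O(\Lambda)$, and it is this control that will force a single exponential $e^{D\Lambda}$ rather than a worse loss.

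The engine of the argument is a quantitative propagation of smallness for $w$ from the observation region to the whole slab. Since $g\in W^{1,\infty}$, the operator $\partial_s^2+\Delta$ has Lipschitz leading coefficients, so one cannot invoke smooth Carleman estimates; instead I would use the three-ball / doubling inequalities valid precisely for elliptic operators with Lipschitz coefficients (in the spirit of Garofalo--Lin and Logunov--Malinnikova), in which the interpolation exponent $\alpha\in(0,1)$ is a fixed geometric constant, independent of the solution, and the frequency enters only through the global majorant. Because $\omega$ is merely measurable (not open), I would first pass from smallness on $\omega$ to smallness on a genuine ball by the propagation of smallness from sets of positive measure, using $\vol_g(\omega)\ge m$ to keep the resulting constants uniform; I would then chain three-ball inequalities along a finite covering of $M$ (finite by compactness, its length controlled by $m$) to reach every point. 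Combining these with the two-sided bounds of the first paragraph and absorbing the global factor $\|w\|_{L^2(\text{slab})}\le Ce^{\Lambda}\|\phi\|_{L^2(M)}$ into the interpolation yields an estimate of the form $\|\phi\|_{L^2(M)}\le C^{1/\alpha}e^{(1-\alpha)\Lambda/\alpha}\,\|w\|_{\text{near }\omega}$, and since $\alpha$ is fixed this is exactly $Ce^{D\Lambda}\|\phi 1_\omega\|_{L^2(M)}$.

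\textbf{The main obstacle} is that the observation lives on the measure-zero slice $\{s=0\}$: the quantity $\|\phi 1_\omega\|_{L^2}$ is a trace, and a naive thickening of $\omega\times\{0\}$ to $\omega\times(-\eta,\eta)$ fails, because the spreading of $w$ off the slice reintroduces $\|\phi\|_{L^2(M)}$ with a coefficient that, however small, cannot beat the propagation loss $e^{D\Lambda}$. The correct route is therefore to propagate smallness directly from the Cauchy data $(w,\partial_s w)=(\phi,0)$ prescribed on the positive $d$-dimensional measure subset $\omega\times\{0\}$ of the hyperplane, which is the genuinely delicate step and the heart of what must be borrowed from \cite{BM}; here the evenness of $w$ in $s$ (reflecting across $\{s=0\}$) and the bound $O(\Lambda)$ on its doubling index are what make the argument quantitative with a single exponential. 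Secondary technical points are the treatment of $\partial M$ under Dirichlet or Neumann conditions, handled by odd or even reflection so that $w$ extends as a solution across the boundary, and the verification that all constants, exponents, and covering data depend only on $m$, the ellipticity, and the Lipschitz norm of $g$, so that $C$ and $D$ are uniform over all $\omega$ with $\vol_g(\omega)\ge m$.
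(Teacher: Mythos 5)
This paper does not actually prove Theorem \ref{thm:spectral inequality}: the statement is imported verbatim from \cite{BM} (``Our result \cite[Thm 2]{BM} is the following''), so there is no internal proof to compare against. Your outline is essentially the argument of \cite{BM} itself: the $\cosh(\lambda_k s)$ elliptic extension to $M\times(-1,1)$, even/odd reflection across $\partial M$ to stay in the Lipschitz-coefficient class, and---for the delicate step you correctly isolate---propagation of smallness for solutions with vanishing normal derivative from a positive-measure subset of the hyperplane $\{s=0\}$. One remark: that key ingredient should be attributed to Logunov--Malinnikova (cited in this paper as \cite{LMpropSmallness}) rather than ``borrowed from \cite{BM}'', both because that is its actual source and because borrowing it from \cite{BM} would make your proof of the \cite{BM} theorem circular.
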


We shall use the spectral inequality (\ref{borne}) in Section \ref{sec:Estimates for the Helmholtz equation}.

\subsection{Outline}

In Section \ref{sec:Some tools} we gather some facts about the tools used in the proof of our main result: 
Section \ref{sec:A criterion for logarithmic decay} is devoted to a characterisation of logarithmic decay,
Section \ref{sec:Estimates for the Helmholtz equation} makes the link between the resolvent operators for waves and Schr\"odinger and the Helmholtz equation, and 
Section \ref{sec:Estimates for the Helmholtz equation} is concerned with some estimates for solutions to the Helmholtz equation obtained thanks to the spectral inequalities mentioned before. 
The proof of Theorem~\ref{thm:decay} is carried out in Sections \ref{sec:Proof of Theorem} and \ref{sec:Neumann} which treat respectively the case of Dirichlet boundary conditions and the case of Neumann boundary conditions. Each of these two sections is also divided into high frequencies and low frequencies.
Finally, Section \ref{sec:Proof of Theorem schrodinger} is concerned with the proof of Theorem \ref{thm:decay schrodinger}, which is also divided into Section \ref{sec:negative frequencies} (negative frequencies) and Section \ref{sec:positive frequencies} (nonnegative frequencies).

\section{Some tools}
\label{sec:Some tools}

In this section we describe first (in Section \ref{sec:A criterion for logarithmic decay}) some abstract results relating the time decay of a semi-group with the growth of the resolvent operator at infinity. Next, in Section \ref{sec:Resolvent and the Helmholtz equation} we focus on the resolvent operators related to the wave equation (\ref{eq:waves}) and the Schr\"odinger equation (\ref{eq:schrodinger}), which in both cases lead to a Helmholtz equation of the form
\begin{equation*}
\Delta u + \lambda u = S, 
\end{equation*} for some parameter $\lambda \in \RR$ and a source term $S$. Finally in Section \ref{sec:Estimates for the Helmholtz equation} we use Theorem \ref{thm:spectral inequality} to get some estimates for solutions to the Helmholtz equation that will be useful in the sequel.

\subsection{Sufficient conditions for logarithmic decay}
\label{sec:A criterion for logarithmic decay}

Consider a Hilbert space $\mathcal{H}$ and the functional equation 
\begin{equation}
\frac{\ud U }{\ud t }  = A U, \quad t \geq 0, \qquad \textrm{with} \qquad U(0) = U_0 \in \mathcal{H},
\label{eq:abstract equation}
\end{equation} for a possibly unbounded operator $A$ with domain $D(A) \subset \mathcal{H}$. As usual, $z \in \CC$ belongs to the resolvent set $\rho(A)$ whenever $(A - z)^{-1} \in \mathcal{L}(\mathcal{H})$. The spectrum of $A$ is $\sigma(A) = \CC \setminus \rho(A)$.

We focus next on the elements of $\rho(A)$ lying on the imaginary axis. For every $\tau \in \RR$ we consider the resolvent mapping $R(\tau) = (A - i \tau)^{-1}$ whenever $i\tau \in \rho(A)$.

\subsubsection{Growth of the resolvent and decay of the semi-group}

Assume that $A$ is the infinitesimal generator of a $C^0$-continuous semi-group of operators in $\mathcal{H}$ that we denote $(e^{tA})_{t \geq 0}$, so that the solution to (\ref{eq:abstract equation}) writes $U(t) = e^{tA} U_0$. Assume further that
\begin{equation*}
\sup_{t \geq 0}  \norm{ e^{tA} }_{\mathcal{L}(\mathcal{H})}  < + \infty. 
\end{equation*}

Batty and Duyckaerts have introduced in \cite{BattyDuyckaerts} a quantitative approach to characterise the asymptotic behaviour of the semi-group, i.e., the fact that for some $k \in \NN^*$,
\begin{equation}
\lim_{t \rightarrow + \infty}  m_k(t)  = 0, \qquad \textrm{with} \qquad m_k(t) = \norm{ e^{tA} (Id - A)^{-k} }_{\mathcal{L}(\mathcal{H})}, \quad t \geq 0,
\label{eq:abstract semigroup decay}
\end{equation} in terms of the purely spectral condition 
\begin{equation}
\sigma(A) \cap i \RR = \emptyset. 
\label{eq:spectral condition}
\end{equation} Observe that this condition ensures that the resolvent operators $R(\tau)$ are well defined for any $\tau \in \RR$. Moreover, it is possible to describe the decay rate of (\ref{eq:abstract semigroup decay}) in terms of the growth of the function 
\begin{equation}
M(\mu) = \sup_{\vert \tau \vert \leq \mu} \norm{ (A - i\tau )^{-1} }_{\mathcal{L}(\mathcal{H})}, \qquad  \mu \in [0, + \infty).
\label{eq:function M}
\end{equation}

The following result, obtained in \cite[Thm. 3]{BurqActa}, guarantees logarithmic decay of all $m_k$ as long as $M$ grows at most exponentially at infinity. 

\begin{thm}[Lebeau-Robbiano~\cite{LeRo95}, Burq \cite{BurqActa}, Batty-Duyckaerts, \protect{\cite[Thm 1.5]{BattyDuyckaerts}}] \label{thm}Assume that~\eqref{eq:spectral condition} holds and that 
\begin{equation*}
\exists C,c > 0 \quad \textrm{such that} \quad M(\mu) < C e^{c| \mu |}, (\text{resp. } M(\mu) < C e^{c\sqrt{| \mu |}} )\qquad \forall \mu \in \RR.
\end{equation*} Then, for any $k > 0$ there exists $C_k$ such that 
\begin{equation*}
\left\|  e^{tA} (Id - A)^{-k}  \right\|_{\mathcal{L}(\mathcal{H})} \leq \frac{C_k}{\log(2 + t)^k}, \qquad \forall t \geq 0.
\end{equation*}

\begin{equation*}
(\text{resp.} \left\|  e^{tA} (Id - A)^{-k}  \right\|_{\mathcal{L}(\mathcal{H})} \leq \frac{C_k}{\log(2 + t)^{2k}}, \qquad \forall t \geq 0.)
\end{equation*}
\label{thm:Burq}
\end{thm}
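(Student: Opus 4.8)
The plan is to reduce the stated logarithmic decay to the general quantitative Tauberian theorem of Batty--Duyckaerts, using the hypothesis on the growth of $M(\mu)$ to control the resolvent on a region to the left of the imaginary axis. First I would recall the abstract principle (\cite[Thm 1.5]{BattyDuyckaerts}): if $A$ generates a bounded $C^0$-semigroup and \eqref{eq:spectral condition} holds, then the decay rate of $m_k(t)=\|e^{tA}(Id-A)^{-k}\|$ is governed by the function $M(\mu)$ through an inverse-type relation. Concretely, the key preliminary step is to pass from the bound on $M(\mu)$ along the imaginary axis to a resolvent bound in a \emph{strip-shaped} neighbourhood of $i\RR$. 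Since $A$ generates a bounded semigroup, the resolvent $(A-z)^{-1}$ is uniformly bounded for $\Re z>0$; combining this with the hypothesis $M(|\tau|)\le Ce^{c|\tau|}$ (resp. $Ce^{c\sqrt{|\tau|}}$) and a Neumann-series perturbation argument, one extends the invertibility of $A-z$ to a region
\begin{equation*}
\Omega=\set{z=\sigma+i\tau : \sigma> -f(|\tau|)},\qquad f(s)\sim \kappa e^{-cs}\ (\text{resp. }\kappa e^{-c\sqrt{s}}),
\end{equation*}
with a quantitative bound on $\|(A-z)^{-1}\|$ there. The point is that $A-z=(A-i\tau)\bigl(Id-(z-i\tau)(A-i\tau)^{-1}\bigr)$ is invertible as soon as $|z-i\tau|\,\|(A-i\tau)^{-1}\|<1$, i.e.\ as soon as $|\sigma|<1/M(|\tau|)$, which yields the exponentially thin region $\Omega$.

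With this resolvent-free region established, the second step is to feed it into the Tauberian machinery. The Batty--Duyckaerts theorem produces, from a resolvent bound of the form $\|(A-i\tau)^{-1}\|\le \Theta(|\tau|)$, a decay rate for $m_1(t)$ of order $\Theta^{-1}(t)$ in an appropriate sense; for higher $k$ one gains extra powers, giving $m_k(t)\lesssim \Theta^{-1}(t)^k$ up to constants. I would then simply invert the admissible growth: when $\Theta(\mu)=Ce^{c\mu}$, the inverse is logarithmic, $\Theta^{-1}(t)\sim \frac1c\log t$, producing the bound $C_k/\log(2+t)^k$; when $\Theta(\mu)=Ce^{c\sqrt{\mu}}$, one has $\Theta^{-1}(t)\sim \frac{1}{c^2}(\log t)^2$, hence the improved $C_k/\log(2+t)^{2k}$ in the resolvent case. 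The normalisation $\log(2+t)$ rather than $\log t$ is just to absorb the behaviour near $t=0$, where $m_k$ is bounded by the semigroup bound times $\|(Id-A)^{-k}\|$.

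The main obstacle, and the step deserving the most care, is the passage from the boundary bound $M(\mu)$ to the quantitative bound inside the region $\Omega$ together with verifying the precise hypotheses of the cited Tauberian theorem; in particular one must check that the boundedness of $(e^{tA})_{t\ge0}$ supplies the needed analyticity and uniform control of $(A-z)^{-1}$ in the right half-plane so that the perturbative extension is valid uniformly in $\tau$. Since Theorem~\ref{thm:Burq} is quoted from \cite{BurqActa,BattyDuyckaerts}, I expect the actual argument to be short: it should consist essentially of invoking \cite[Thm 1.5]{BattyDuyckaerts} and inverting the two admissible growth functions, with the perturbative region $\Omega$ being the one nontrivial verification. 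The remaining computations—inverting $e^{c\mu}$ and $e^{c\sqrt{\mu}}$ and tracking the power $k$—are routine and I would not grind through them in detail.
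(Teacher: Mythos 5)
The paper never proves Theorem~\ref{thm:Burq}: it is a black-box citation of \cite[Thm 3]{BurqActa} and \cite[Thm 1.5]{BattyDuyckaerts}, so the only benchmark is the argument in those references. Your sketch reconstructs exactly that argument --- the Neumann-series extension $A-z=(A-i\tau)\bigl(Id-\sigma(A-i\tau)^{-1}\bigr)$ giving invertibility and a bound $\lesssim M(|\tau|)$ on the region $|\Re z|\lesssim 1/M(|\Im z|)$, followed by the Batty--Duyckaerts/contour-integration Tauberian step that converts growth $\Theta$ of the resolvent into decay $\bigl(\Theta^{-1}(t)\bigr)^{-k}$ for $m_k$, the power $k$ coming from the semigroup identity $e^{tA}(Id-A)^{-k}=\bigl(e^{(t/k)A}(Id-A)^{-1}\bigr)^k$ --- so it is essentially the same approach and is correct as a sketch, up to the harmless notational slip that your ``$m_k(t)\lesssim\Theta^{-1}(t)^k$'' should be the reciprocal, as your final formulas indeed have.
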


We shall use this result in Sections \ref{sec:Proof of Theorem} and \ref{sec:Proof of Theorem schrodinger} to get the decay of solutions to (\ref{eq:waves}) and (\ref{eq:schrodinger}).

\subsection{The resolvent operator and the Helmholtz equation}
\label{sec:Resolvent and the Helmholtz equation}

In this section we make explicit the choice of functional framework (compatible with Section \ref{sec:A criterion for logarithmic decay}) associated to the wave equation and the Schr\"odinger equations.

\subsubsection{The resolvent operator for waves}\label{sec:resolvent operator for waves} Following the notation of Section \ref{sec:A criterion for logarithmic decay}, let us set 
\begin{equation}
A U = \begin{pmatrix}
0 & Id \\ \Delta & -a(x) 
\end{pmatrix}, \qquad U = \begin{pmatrix}
u \\ v
\end{pmatrix},
\label{eq:operator waves}
\end{equation} in the Hilbert space $\mathcal{H} = H^1(M) \times L^2 (M)$ endowed with the natural inner product. As usual, $D(A) = H^2(M) \times H^1(M)$ if $\partial M = \emptyset$ or $D(A) = ( H^2(M) \cap H^1_0(M)) \times H^1(M)$ if we impose Dirichlet boundary conditions (the case of Neumann boundary conditions is slightly more involved and we deal with it in Section \ref{sec:Neumann}). The solution of (\ref{eq:abstract equation}) is given by 
\begin{equation}
U(t) = e^{tA} \begin{pmatrix}
u_0 \\ u_1
\end{pmatrix}
\label{eq:semigroup waves}
\end{equation} and the solution of (\ref{eq:waves}) is given by the first component of $U(t)$. Let $\tau \in \RR$ and consider the resolvent operator $R(\tau) = (A - i \tau )^{-1}$. For any $\begin{pmatrix}
f \\ g
\end{pmatrix} \in \mathcal{H}$, one has 
\begin{equation*}
\begin{pmatrix}
u \\ v 
\end{pmatrix} = (A - i \tau)^{-1} \begin{pmatrix}
f \\ g
\end{pmatrix} \Leftrightarrow   \left\{  \begin{array}{l}
v - i \tau u = f,  \\ \Delta u  - a v - i \tau v = g.
\end{array}     \right.
\end{equation*} Using that $v = i\tau u + f$, we find
\begin{equation*}
\Delta u  - a  (i \tau u + f ) + \tau^2 u - i \tau f = g, \qquad \textrm{in } M, 
\end{equation*} and hence $u$ satisfies the Helmholtz equation
\begin{equation}
\Delta u  +  \tau^2 u = g  + ( a + i \tau ) f  + i a  \tau u, \qquad \textrm{in } M.
\label{eq:Helmholtz2}
\end{equation}

\subsubsection{The resolvent operator for Schr\"odinger}
\label{sec: The resolvent operator for Schrodinger} In this case we set $\mathcal{H} = L^2(M;\CC)$ and
\be
A = i \Delta - a(x), \qquad D(A) = H^2(M;\CC). 
\label{eq:operator Schrodinger}
\ee For $\psi_0 \in \mathcal{H}$ given, the solution of (\ref{eq:schrodinger}) is then given by the 
\be
U(t)\psi_0 = e^{tA} \psi_0, \qquad  t \geq 0.  
\label{eq:semigrooup Schrodinger}
\ee Now, let $\tau \in \RR$ and consider the resolvent operator $R(\tau) = (A - i \tau)^{-1}$. If for some $f \in \mathcal{H}$ the function $\psi \in D(A)$ is such that $(A - i \tau)^{-1}\psi = f$, then $\psi$ satisfies the following Helmholtz equation: 
\be
\Delta \psi - \tau \psi + i a(x) \psi = -i f, \qquad \textrm{in } M.
\label{eq:Helmholtz Schrodinger}
\ee

\subsection{Estimates for the Helmholtz equation}
\label{sec:Estimates for the Helmholtz equation}

We state for convenience a unique continuation result for the Helmholtz equation that will be useful in Section \ref{sec: Proof Waves Low}. The unique continuation from \emph{small} sets follows from the Remez inequalities obtained in \cite[Section 1 Eq. (6)  ]{LMpropSmallness}. 

\begin{lem}
Let $\omega \subset M$ be a measurable set with $\vol_g(\omega)>0$. Let $\lambda \in \RR$ be fixed and let $u$ be the solution to the Helmholtz equation 
\begin{equation}
\Delta u + \lambda u = 0, \qquad \textrm{in } M.
\label{eq:Helmholtz uc}
\end{equation} Then, $u$ satisfies the unique continuation principle on $\omega$, i.e.,
\begin{equation}
u|_{\omega} = 0 \qquad \Rightarrow \qquad  u|_{M} = 0.
\label{eq:Helmholtz uc}
\end{equation}
\label{lemma:Helmholtz}
\end{lem}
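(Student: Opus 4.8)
The plan is to reduce the statement to the spectral inequality of \thmref{thm:spectral inequality}, thereby avoiding any appeal to Carleman estimates, which are delicate for the merely Lipschitz metric $g$ at hand. First I would use that $M$ is compact, so that $-\Delta$ has discrete spectrum $\{\lambda_k^2\}$ with finite-dimensional eigenspaces and $(e_k)_{k\in\NN}$ a Hilbert basis of $L^2(M)$. A solution $u$ of $\Delta u + \lambda u = 0$ (in the appropriate weak sense, with the prescribed boundary conditions) is then either trivial or a genuine eigenfunction of $-\Delta$ with eigenvalue $\lambda$.

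Two cases then arise. If $\lambda$ is not one of the eigenvalues $\lambda_k^2$ (in particular if $\lambda < 0$, since $-\Delta \geq 0$), then $\Delta u + \lambda u = 0$ forces $\langle u, e_k \rangle = 0$ for every $k$, so $u = 0$ and there is nothing to prove. If instead $\lambda = \lambda_{k_0}^2$ for some $k_0$, then expanding $u = \sum_k \langle u, e_k\rangle e_k$ and inserting this into the eigenvalue equation shows that $\langle u, e_k \rangle = 0$ whenever $\lambda_k^2 \neq \lambda$. Hence
\[
u = \sum_{\lambda_k^2 = \lambda} \langle u, e_k \rangle\, e_k
\]
is a \emph{finite} linear combination of eigenfunctions, all of which satisfy $\lambda_k = \Lambda := \lambda_{k_0}$, so in particular $\lambda_k \leq \Lambda$. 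Thus $u$ is precisely of the form to which \thmref{thm:spectral inequality} applies.

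Finally I would apply the spectral inequality \eqref{borne} with $\omega$ the given set and $m = \vol_g(\omega) > 0$: there exist constants $C, D > 0$ such that $\| u \|_{L^2(M)} \leq C e^{D \Lambda} \| u\, 1_\omega \|_{L^2(M)}$. The hypothesis $u|_\omega = 0$ makes the right-hand side vanish, whence $\| u \|_{L^2(M)} = 0$ and therefore $u \equiv 0$ on $M$, which is the claim.

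As for the main difficulty: the entire analytic content of the statement, namely genuine unique continuation from a set of merely positive measure, uniform over all such sets of a given measure and valid for a metric of low regularity, is carried by \thmref{thm:spectral inequality}. Once that result is granted, the present lemma is essentially a one-line consequence, and the only point requiring (minimal) care is the reduction to a finite-dimensional eigenspace, which rests on the compactness of the resolvent of $-\Delta$ and hence on the compactness of $M$. I would stress that a direct PDE approach via Carleman estimates would be problematic in this setting, since the metric is only Lipschitz and unique continuation is known to fail for elliptic operators with insufficiently regular coefficients; the spectral route sidesteps this obstruction entirely.
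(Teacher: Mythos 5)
Your proof is correct, but it takes a different route from the paper's. The paper gives no standalone argument for Lemma~\ref{lemma:Helmholtz}: it disposes of it by direct citation, asserting that unique continuation from sets of positive measure follows from the Remez-type inequalities of Logunov and Malinnikova \cite[Section 1, Eq.~(6)]{LMpropSmallness}. You instead deduce the lemma from Theorem~\ref{thm:spectral inequality}, the spectral inequality of \cite{BM} already quoted in the paper: a solution of $\Delta u+\lambda u=0$ on the compact manifold $M$ is either zero (when $\lambda$ is not an eigenvalue of $-\Delta$, in particular when $\lambda<0$) or a finite combination $\sum_{\lambda_k^2=\lambda}\langle u,e_k\rangle e_k$ lying in a single eigenspace, which is exactly of the form covered by \eqref{borne}; the hypothesis $u|_\omega=0$ then annihilates the right-hand side of the inequality and forces $u\equiv 0$. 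This derivation is sound, and it has the merit of making the lemma a consequence of the one black box the paper already relies on, rather than introducing a second external reference; note, however, that it is not genuinely independent technology, since the spectral inequality of \cite{BM} is itself established via the Logunov--Malinnikova propagation-of-smallness results that the paper cites directly, so both routes rest on the same analytic core. Two minor points in your write-up: when $\lambda=0$ (constants, in the Neumann or boundaryless case) your choice $\Lambda=\lambda_{k_0}=0$ is not literally admissible in Theorem~\ref{thm:spectral inequality}, which is stated for $\Lambda>0$; taking, say, $\Lambda=1$ fixes this trivially. Also, the exponential factor $Ce^{D\Lambda}$ plays no role in your argument --- any finite constant would do --- so you are using only the qualitative content of the spectral inequality, which is consistent with the fact that quantitative versions are needed elsewhere in the paper (Proposition~\ref{prop:Helmholtz}) but not here.
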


We state next an estimate for solutions to the Helmholtz equation with source that will be used in Section \ref{sec: Proof Waves High} to get a suitable exponential bound in the high frequency regime. The following result follows from the spectral inequality (\ref{borne}) in Theorem \ref{thm:spectral inequality} above.

\begin{prop}
Let $\omega \subset M$ be a measurable set with $\vol_g(\omega)>0$. There exist constants $C= C(\omega)> 0$ and $D =D(\omega) > 0$ such that for every $\mu \in \RR$ and $S \in L^2(M)$, the solution to the Helmholtz equation 
\begin{equation}
\Delta u + \mu^2 u = S, \qquad \textrm{in } M, \qquad u\mid_{\partial M} =0 \text{ (Dirichlet) or } \partial_\nu u \mid_{\partial M} =0 \text{ (Neumann)} 
\label{eq:Helmholtz}
\end{equation} satisfies
\begin{equation}
\| u \|_{L^2(M)} \leq Ce^{D|\mu|} \left(  \|S \|_{L^2(M)} + \| 1_{\omega} u \|_{L^2(M)} \right).
\label{eq:Helmholtz estimate}
\end{equation}
\label{prop:Helmholtz}
\end{prop}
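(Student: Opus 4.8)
The plan is to expand $u$ in the $L^2$-orthonormal eigenbasis $(e_k)$ and split it at a frequency threshold $\Lambda$ comparable to $|\mu|$: the spectral inequality (\ref{borne}) will control the low-frequency part from its restriction to $\omega$, while the high-frequency part is controlled directly from the source $S$ because there the Helmholtz symbol $\mu^2-\lambda_k^2$ is bounded away from zero. The whole point of the splitting is to quarantine the resonant frequencies $\lambda_k\approx|\mu|$, where the symbol degenerates, inside the low-frequency block, so that one never has to invert a small quantity.

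Concretely, I would write $u=\sum_k u_k e_k$ and $S=\sum_k S_k e_k$, with $u_k=\langle u,e_k\rangle$ and $S_k=\langle S,e_k\rangle$, noting $u\in L^2(M)$. Since the $e_k$ satisfy the \emph{same} boundary conditions (Dirichlet or Neumann) as $u$, pairing (\ref{eq:Helmholtz}) with $e_k$ and applying Green's identity—the boundary terms cancelling precisely because of the matched boundary conditions—gives the scalar identities $(\mu^2-\lambda_k^2)u_k=S_k$ for every $k$. I then set $\Lambda=2|\mu|+1$ and decompose $u=u_\flat+u_\sharp$, where $u_\flat=\sum_{\lambda_k\le\Lambda}u_k e_k$ and $u_\sharp=\sum_{\lambda_k>\Lambda}u_k e_k$.

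For the high-frequency part, the key observation is that $\lambda_k>\Lambda=2|\mu|+1$ forces $\lambda_k^2-\mu^2=(\lambda_k-|\mu|)(\lambda_k+|\mu|)>1$, since each factor exceeds $1$; hence $|u_k|=|S_k|/|\mu^2-\lambda_k^2|\le|S_k|$ and, by Parseval, $\|u_\sharp\|_{L^2(M)}\le\|S\|_{L^2(M)}$. In particular any resonant index with $\lambda_k=|\mu|$ satisfies $\lambda_k\le\Lambda$ and so lands in $u_\flat$, confirming that no division by a vanishing symbol ever occurs. For the low-frequency part I would apply \thmref{thm:spectral inequality} with $m=\vol_g(\omega)$ to the finite eigenfunction combination $u_\flat$, obtaining constants $C,D$ depending only on $\omega$ with $\|u_\flat\|_{L^2(M)}\le Ce^{D\Lambda}\|1_\omega u_\flat\|_{L^2(M)}$. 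Since $1_\omega u_\flat=1_\omega u-1_\omega u_\sharp$ and $\|1_\omega u_\sharp\|\le\|u_\sharp\|\le\|S\|$, this yields $\|u_\flat\|_{L^2(M)}\le Ce^{D\Lambda}\bigl(\|1_\omega u\|_{L^2(M)}+\|S\|_{L^2(M)}\bigr)$.

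Finally I combine the two blocks through $\|u\|\le\|u_\flat\|+\|u_\sharp\|$ and absorb the residual $\|S\|$ contributions into the factor $e^{D\Lambda}\ge1$, giving $\|u\|_{L^2(M)}\le Ce^{D\Lambda}\bigl(\|S\|_{L^2(M)}+\|1_\omega u\|_{L^2(M)}\bigr)$; since $\Lambda=2|\mu|+1$, we have $e^{D\Lambda}\le e^{D}\,e^{2D|\mu|}$, and after renaming the constants this is exactly (\ref{eq:Helmholtz estimate}). The only delicate point—and the reason the statement retains the full $e^{D|\mu|}$ loss rather than a resolvent-type bound—is the near-resonant regime $\lambda_k\approx|\mu|$; the scheme is arranged so that these frequencies are handled by the spectral inequality, which trades the lack of ellipticity there for an observation from the set $\omega$ at the cost of the exponential factor.
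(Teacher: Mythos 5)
Your proof is correct and follows essentially the same route as the paper: expand in the eigenbasis, control the non-resonant frequencies by inverting the symbol $\mu^2-\lambda_k^2$ (which exceeds $1$ there, giving the bound by $\|S\|_{L^2(M)}$), apply the spectral inequality of Theorem~\ref{thm:spectral inequality} to the remaining finite block of frequencies, and recombine with the triangle inequality, trading $1_\omega u_\flat$ for $1_\omega u$ at the cost of another $\|S\|_{L^2(M)}$. The only cosmetic difference is where the cut is made: the paper splits according to $|\mu^2-\lambda_k^2|\le 1$ versus $>1$ (so its spectral-inequality block consists exactly of the near-resonant frequencies, with $\Lambda\le|\mu|+1$), whereas you cut at $\lambda_k\le 2|\mu|+1$, which merely changes the constants $C,D$.
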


\begin{proof}

Let $u$ be given by (\ref{eq:Helmholtz}). If $S \in L^2(M)$, using the orthonormal basis $(e_k)$ as in Section \ref{sec:Notation}, we can write 
\begin{equation*}
S = \sum_{k \in \NN} S_k e_k.
\end{equation*} Then, we can split $u$ into ``hyperbolic" and ``elliptic"  frequencies as follows
\begin{equation*}
u = 1_{| \Delta + \mu^2| \leq 1 }u + 1_{| \Delta + \mu^2| > 1 }u,
\end{equation*} where 
\begin{equation*}
1_{| \Delta + \mu^2| > 1 }u = \sum_{k \in \NN; \, \mu^2 - k^2> 1} \frac{S_k}{\mu^2 - k^2} e_k.
\end{equation*} Thanks to this explicit expression, we have 
\begin{equation}
\left\|  1_{| \Delta + \mu^2| > 1 }u   \right\|_{L^2(M)} \leq \left\|  S   \right\|_{L^2(M)}.
\label{eq:estimate high frequencies Helmholtz}
\end{equation} Now applying Theorem \ref{spectral} on the ``hyperbolic" frequencies, we get 
\begin{align*}
\left\|  1_{| \Delta + \mu^2| \leq 1 }u   \right\|_{L^2(M)} & \leq Ce^{D|\mu|} \left\|  1_{| \Delta + \mu^2| \leq 1 }u   \right\|_{L^2(\omega)} \\
& \leq  Ce^{D|\mu|} \left(  \left\|  1_{| \Delta + \mu^2| \leq 1 }u   \right\|_{L^2(\omega)} +  \left\|  1_{| \Delta + \mu^2| > 1 }u   \right\|_{L^2(\omega)}       \right) \\
& \leq  Ce^{D|\mu|} \left(  \left\|  1_{| \Delta + \mu^2| \leq 1 }u   \right\|_{L^2(\omega)} +  \left\|  S   \right\|_{L^2(M)}       \right) \\
& \leq  Ce^{D|\mu|} \left(  \left\|  u   \right\|_{L^2(\omega)} +  \left\|  S  \right\|_{L^2(M)}       \right),
\end{align*} where we have used (\ref{eq:estimate high frequencies Helmholtz}).
\end{proof}

\section{Proof of Theorem \ref{thm:decay} for Dirichlet boundary conditions}
\label{sec:Proof of Theorem}

Following the notation of Section \ref{sec:A criterion for logarithmic decay}, let $\tau \in \RR$ and consider the resolvent operator $R(\tau) = (A - i \tau )^{-1}$. Throughout this section we assume that the boundary conditions are of Dirichlet type only and use the notation of Section \ref{sec:resolvent operator for waves}.

In Sections \ref{sec: Proof Waves High} we prove the resolvent estimate for the wave equations for $\tau \geq \tau^*$ (with $\tau^*$ a constant sufficiently large. Then in Section \ref{sec: Proof Waves Low} we prove the estimate for $\tau \leq \tau^*$.

\subsection{Proof of Theorem \ref{thm:decay}: High frequencies}
\label{sec: Proof Waves High}

\begin{prop} Let $F \subset E \subset M$ and a damping $a$ satisfying (\ref{eq:hypothesis damping}). Then there exist $\tau_* \geq 1$ large enough and constants  $C_h, c_h>0$ independent of $\tau$ such that for every  $\vert \tau \vert \geq \tau_*$ we have 
\be
\| U \|_{H^1(M) \times L^2(M)} \leq C_h e^{c_h |\tau| } \| (f,g ) \|_{H^1(M) \times L^2(M)},  
\label{eq:estimate high frequencies}
\ee for every $h = (f,g) \in \mathcal{H}$ and every $U =(u,v) = (A - i \tau )^{-1} h \in D(A)$. 
\label{prop: high freq}
\end{prop}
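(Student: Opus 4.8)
The plan is to use the reduction to the Helmholtz equation carried out in Section~\ref{sec:resolvent operator for waves} and then apply the spectral estimate of Proposition~\ref{prop:Helmholtz} with the observation set $\omega = F$. Recall that if $U=(u,v)=(A-i\tau)^{-1}(f,g)$, then $v=i\tau u+f$ and $u$ solves the Helmholtz equation (\ref{eq:Helmholtz2}), which I write as
\begin{equation*}
\Delta u + \tau^2 u = S, \qquad S := g + (a+i\tau)f + i a\tau u .
\end{equation*}
Applying Proposition~\ref{prop:Helmholtz} with $\mu=\tau$ and $\omega=F$ immediately gives $\|u\|_{L^2(M)}\le C e^{D|\tau|}\big(\|S\|_{L^2(M)}+\|1_F u\|_{L^2(M)}\big)$. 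The obstacle is that the source $S$ contains the \emph{self-referential} term $i a\tau u$, where the unknown $u$ reappears multiplied by $\tau$; the naive bound $\|a\tau u\|_{L^2}\le \beta|\tau|\,\|u\|_{L^2}$ cannot be absorbed on the left, so this term must be controlled by a genuinely smaller quantity.

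The key step is to produce an a~priori bound on the weighted quantity $\int_M a|u|^2\,dx$, which simultaneously tames both the dangerous source term and the observation term. Testing the Helmholtz equation against $\bar u$ and integrating by parts (the boundary contribution vanishing under the Dirichlet condition) yields
\begin{equation*}
-\|\nabla u\|_{L^2}^2 + \tau^2\|u\|_{L^2}^2 - i\tau\int_M a|u|^2\,dx = \int_M \big(g+(a+i\tau)f\big)\bar u\,dx .
\end{equation*}
Since $a$ and $|u|^2$ are real, taking imaginary parts isolates the damping term and gives, for $|\tau|\ge 1$,
\begin{equation*}
\int_M a|u|^2\,dx \le \frac{\|g\|_{L^2}+(\beta+|\tau|)\|f\|_{L^2}}{|\tau|}\,\|u\|_{L^2} \le C\,\|(f,g)\|_{\mathcal H}\,\|u\|_{L^2}.
\end{equation*}
From the two-sided bound $\alpha 1_F\le a\le \beta$ of (\ref{eq:hypothesis damping}) I then read off both estimates I need: $\|1_F u\|_{L^2}^2\le \alpha^{-1}\int_M a|u|^2$, and $\|a\tau u\|_{L^2}^2=\tau^2\int_M a^2|u|^2\le \beta\tau^2\int_M a|u|^2$ (using $a^2\le\beta a$). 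Both are therefore bounded by $C\|(f,g)\|_{\mathcal H}\|u\|_{L^2}$, the second carrying an extra factor $\tau^2$.

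Feeding these into the Helmholtz estimate produces a closed inequality of the form
\begin{equation*}
\|u\|_{L^2}\le C e^{D|\tau|}|\tau|\Big(\|(f,g)\|_{\mathcal H}+\|(f,g)\|_{\mathcal H}^{1/2}\|u\|_{L^2}^{1/2}\Big),
\end{equation*}
which, read as a quadratic inequality in $\|u\|_{L^2}^{1/2}$, gives $\|u\|_{L^2}\le C e^{2D|\tau|}\tau^2\|(f,g)\|_{\mathcal H}\le C_h e^{c_h|\tau|}\|(f,g)\|_{\mathcal H}$ once the polynomial factor $\tau^2$ is absorbed into the exponential; the largeness of $\tau_*$ is used only to make this resolution and absorption clean.

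It then remains to recover the full $H^1\times L^2$ norm of $U$. The relation $v=i\tau u+f$ gives $\|v\|_{L^2}\le|\tau|\|u\|_{L^2}+\|f\|_{L^2}$, while the real part of the same test identity gives $\|\nabla u\|_{L^2}^2\le \tau^2\|u\|_{L^2}^2+\|g+(a+i\tau)f\|_{L^2}\|u\|_{L^2}$; both are again controlled by $C_h e^{c_h|\tau|}\|(f,g)\|_{\mathcal H}$, which yields (\ref{eq:estimate high frequencies}). I expect the main difficulty to be exactly the self-referential term $i a\tau u$ in $S$, and the decisive device overcoming it is the weighted bound on $\int_M a|u|^2$ extracted from the imaginary part of the energy identity, since this quantity is precisely the one that both the damping coefficient and the observation set can exploit.
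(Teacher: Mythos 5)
Your proposal is correct and follows essentially the same route as the paper: reduction to the Helmholtz equation (\ref{eq:Helmholtz2}), application of Proposition~\ref{prop:Helmholtz} with $\omega=F$, and control of the self-referential term $ia\tau u$ together with the observation term $\|1_F u\|_{L^2}$ via the imaginary part of the energy identity, which bounds $\int_M a|u|^2\,dx$. The only cosmetic difference is the absorption mechanism: the paper uses Young's inequality with a $\tau$-dependent, exponentially small parameter $\epsilon$ to absorb the resulting $\|u\|_{L^2}$ term with coefficient $\tfrac12$, while you close the estimate by resolving a quadratic inequality in $\|u\|_{L^2}^{1/2}$ and absorbing the polynomial factor $\tau^2$ into the exponential --- these are equivalent.
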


\begin{proof} 
First recall that for any $ h = \begin{pmatrix}
f \\ g
\end{pmatrix} \in \mathcal{H}$, the element $U= \begin{pmatrix}
u \\ v
\end{pmatrix}= (A - i \tau )^{-1} h$ satisfies the Helmholtz equation (\ref{eq:Helmholtz2}) with the boundary conditions 
$$ u\mid_{\partial M}= 0$$ and the identities 
\begin{equation*}
v - i \tau u = f  \quad \textrm{and} \quad   \Delta u  - a v - i \tau v = g, \qquad \textrm{in } M.
\end{equation*} The first equation yields 
\begin{equation*}
\norm{v}_{L^2} \leq  |\tau| \norm{u}_{L^2} + \norm{ f}_{L^2} 
\end{equation*} and the second one, after multiplying by $\overline{u}$ and integrating, gives, for any $\vert \tau \vert \geq 1$, 

\begin{align*}
\norm{\nabla u }_{L^2}^2  & \leq  \int_M  \left| \left( (a + i \tau) v + g \right)\overline{u} \right| \ud x \\
& =  \int_M \left| \left( (a + i \tau) (f + i \tau u) + g \right)\overline{u} \right| \ud x \\
& \lesssim ( 1 + \beta + \tau^2) \norm{u}_{L^2} \left( \norm{f}_{L^2} + \norm{g}_{L^2} + \norm{u}_{L^2}\right),
\end{align*} where we have used that $a \leq \beta$. We deduce
\begin{equation*}
\norm{ u }_{H^1} \leq C ( 1 + \beta + \tau^2) \left( \norm{f}_{L^2} + \norm{g}_{L^2} + \norm{u}_{L^2}\right). 
\end{equation*} Hence,  it is sufficient to estimate $\norm{u }_{L^2}$ to get an estimate on $\| U \|_{H^1(M) \times L^2(M)}$. Let us focus on  $\norm{u }_{L^2}$. Recalling that $u$ satisfies the Helmholtz equation (\ref{eq:Helmholtz2}), using Proposition \ref{prop:Helmholtz} with 
\begin{equation*}
\omega = F \qquad \textrm{and} \qquad S = g  + ( a + i \tau ) f  + i a  \tau u, \qquad \mu = \tau,
\end{equation*} the estimate (\ref{eq:Helmholtz estimate}) yields
\begin{align*}
\| u \|_{L^2(M)} & \leq Ce^{D|\tau|} \left( \|  g  + ( a + i \tau ) f  + i a  \tau u \|_{L^2(M)}  + \| 1_F u \|_{L^2(M)}     \right)  \\
& \leq Ce^{D|\tau|} \left( \|  g  \|_{L^2(M)}  + ( |\tau| + \beta) \|  f  \|_{L^2(M)}    +  |\tau| \| a u \|_{L^2(M)}  +  \| 1_F u \|_{L^2(M)}   \right)  \\
& \leq (1 + |\tau| + \beta) Ce^{D|\tau|} \| (f,g ) \|_{\mathcal{H}} + Ce^{D|\tau|}  \left( |\tau|  \| a u \|_{L^2(M)}  +  \| 1_F u \|_{L^2(M)} \right). 
\end{align*} On the other hand, (\ref{eq:hypothesis damping}) implies 
\begin{equation*}
\| 1_F u \|_{L^2(M)} \leq \frac{\sqrt{\beta}}{\alpha} \| \sqrt{a} u \|_{L^2(M)},
\end{equation*} and 
\begin{equation*}
\| a u \|_{L^2(M)} \leq \sqrt{\beta}  \| \sqrt{a} u \|_{L^2(M)}.
\end{equation*} As a result, we get 
\begin{equation}
\| u \|_{L^2(M)} \leq (1 + |\tau| + \beta) Ce^{D|\tau|} \| (f,g ) \|_{\mathcal{H}} + \left(  |\tau| +  \frac{1}{\alpha}  \right) \sqrt{\beta} Ce^{D|\tau|} \| \sqrt{a} u \|_{L^2(M)}.
\label{eq:ProofMain FirstEstimate}
\end{equation}
Next, we need to estimate $\| \sqrt{a} u \|_{L^2(M)}$ in terms of  $\| (f,g) \|_{\mathcal{H}}$ and $\tau$. Using (\ref{eq:Helmholtz2}) we obtain
\begin{equation*}
\int_M (\Delta u  +  \tau^2 u ) \overline{u} \ud x  =  \int_M \left(  g  + ( i \tau + a) f  + i a  \tau u \right) \overline{u} \ud x
\end{equation*} and hence,
\begin{equation*}
-\int_M |\nabla_x u|^2 \ud x + \tau^2 \int_M  |u |^2 \ud x  =  \int_M \left(  g  + ( i \tau + a) f \right) \overline{u} \ud x  + i \tau \int_M a  |u|^2 \ud x.
\end{equation*} Taking the imaginary part, we find 
\begin{align*}
| \tau | \left\| \sqrt{a} u  \right\|^2_{L^2(M)}  & =  \left| \Im \int_M \left(  g  + ( i \tau + a) f \right) \overline{u} \ud x \right| \\
 & \leq \frac{1}{2\epsilon} \left\|  g  + ( i \tau + a) f \right\|_{L^2(M)}^2 + \frac{\epsilon}{2} \| u \|_{L^2(M)}^2 \\
 & \leq \frac{1}{2\epsilon} \left\|  g \right\|_{L^2(M)}^2  + \frac{1}{2 \epsilon} ( | \tau | + \beta^2 )  \left\|  f \right\|_{L^2(M)}^2 + \frac{\epsilon}{2} \| u \|_{L^2(M)}^2,
\end{align*} for every $\epsilon>0$. Choosing 
\begin{equation*}
\epsilon = \frac{  | \tau | e^{-2 D|\tau|}  }{2   \left(  | \tau | +  \frac{1}{\alpha}  \right)^2 \beta C^2  }
\end{equation*} one finds
\begin{align*}
\left\| \sqrt{a} u  \right\|^2_{L^2(M)} & 
\leq \frac{1}{\tau^2} \left(  | \tau | +  \frac{1}{\alpha}  \right)^2 \beta C^2  e^{2D|\tau|}  \left( \left\|  g \right\|_{L^2(M)}^2  + ( | \tau | + \beta^2 )  \left\|  f \right\|_{L^2(M)}^2  \right)  \\
& \qquad \qquad \qquad \qquad  + \frac{   e^{-2 D|\tau|}  }{4   \left(  | \tau | +  \frac{1}{\alpha}  \right)^2 \beta C^2  } \| u \|_{L^2(M)}^2 
\end{align*} and thus,
\begin{align*}
\left\| \sqrt{a} u  \right\|_{L^2(M)} & 
\leq \left(  1 +  \frac{1}{\alpha |\tau|}  \right) \sqrt{\beta} C \left( 1 + \sqrt{ |\tau| + \beta^2 }  \right)  e^{D|\tau|}  \|(f,g) \|_{\mathcal{H}}  \\
& \qquad \qquad \qquad \qquad  + \frac{   e^{-D|\tau|}  }{2   \left(  |\tau| +  \frac{1}{\alpha}  \right) \sqrt{\beta} C  } \| u \|_{L^2(M)}. 
\end{align*} Now, we get from (\ref{eq:ProofMain FirstEstimate})
\begin{multline*}
\| u \|_{L^2(M)}  \leq (1 + | \tau | + \beta) Ce^{D|\tau|} \| (f,g ) \|_{\mathcal{H}} + \left(  | \tau | +  \frac{1}{\alpha}  \right) \sqrt{\beta} Ce^{D|\tau|} \| \sqrt{a} u \|_{L^2(M)} \\
 \leq (1 + | \tau | + \beta) Ce^{D|\tau|} \| (f,g ) \|_{\mathcal{H}} \hfil \\
  + \frac{1}{| \tau |} \left(  | \tau | +  \frac{1}{\alpha}  \right)^2 \beta C^2 e^{2D|\tau|}  \left( 1   + \sqrt{ | \tau | + \beta^2 } \right) \| (f,g ) \|_{\mathcal{H}} + \frac{1}{2} \| u \|_{L^2(M)}.
\end{multline*} Then,  if  $\tau_*$ is large enough so that $\tau^* \geq 1$, we have 
\begin{align}
\frac{1}{2} \| u \|_{L^2(M)} & \leq C \left( (1 + | \tau | + \beta)   +    e^{D|\tau|} C \beta (1  +  \sqrt{ | \tau | + \beta^2 } ) \left(  | \tau | +  \frac{1}{\alpha}  \right)^2  \right)  e^{D|\tau|} \| (f,g ) \|_{\mathcal{H}}  \label{eq:choice of tau} \\
 & \leq  C' e^{2 D|\tau|} \| (f,g ) \|_{\mathcal{H}} ,  \nonumber
\end{align} for every $\vert \tau \vert \geq \tau_*$, where $C'$ is a positive constant depending only on $\alpha,\beta,C,D,\tau^*$. Hence, estimate (\ref{eq:estimate high frequencies}) follows for some constants $C_h,c_h$ large enough, depending only on $\alpha,\beta,C,D,$ and $\tau_*$.

\end{proof}

\subsection{Proof of Theorem \ref{thm:decay}: Low frequencies  (Dirichlet boundary conditions)}
\label{sec: Proof Waves Low}

\begin{prop} Let $F \subset E \subset M$ and a damping $a$ satisfying (\ref{eq:hypothesis damping}). For any $\tau_*>0$ (we shall choose $\tau_*$ given by Proposition \ref{prop: high freq}), there exist a constant $C_{\ell} >0$ such that for any $|\tau| < \tau_*$, every $h = (f,g) \in \mathcal{H}$ and every $U$ such that $U = (A - i \tau)^{-1} h$. 
\be
\| U \|_{H^1(M) \times L^2(M)} \leq C_{\ell} \| (f,g ) \|_{H^1(M) \times L^2(M)}.
\label{eq:estimate low frequencies}
\ee 
\label{prop: low freq} 
\end{prop}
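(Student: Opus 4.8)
The plan is to deduce the uniform low-frequency bound from the purely spectral fact that the segment $\{ i\tau : |\tau| \le \tau_* \}$ lies in the resolvent set $\rho(A)$. Indeed, on its domain of existence the resolvent $z \mapsto (A - z)^{-1}$ is holomorphic, hence continuous in the operator norm of $\mathcal{L}(\mathcal{H})$; so once I know that $i\tau \in \rho(A)$ for every $\tau \in [-\tau_*, \tau_*]$, the function $\tau \mapsto \norm{(A - i\tau)^{-1}}_{\mathcal{L}(\mathcal{H})}$ is continuous on the compact interval $[-\tau_*, \tau_*]$ and therefore attains a finite maximum, which I take to be $C_\ell$. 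This immediately yields (\ref{eq:estimate low frequencies}) for all $|\tau| < \tau_*$. Thus the entire content of the proposition reduces to the spectral statement $\sigma(A) \cap i\RR = \emptyset$.

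To prove this I first reduce to the absence of \emph{eigenvalues} on the imaginary axis. Since the damped flow dissipates energy (with the energy norm one computes $\tfrac{\ud}{\ud t}\norm{U}_{\mathcal{H}}^2 = -2\int_M a|v|^2\,\ud x \le 0$), the operator $A$ generates a bounded contraction semigroup, so the open right half-plane is contained in $\rho(A)$; fix any $z_0$ there. The operator $(A - z_0)^{-1}$ maps $\mathcal{H}$ into $D(A) = (H^2(M) \cap H^1_0(M)) \times H^1_0(M)$, and because $M$ is compact the inclusion $D(A) \hookrightarrow \mathcal{H}$ is compact (via the compact Sobolev embeddings $H^2 \hookrightarrow H^1$ and $H^1 \hookrightarrow L^2$). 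Hence $A$ has compact resolvent, its spectrum is discrete, and $i\tau \in \sigma(A)$ can occur only if $i\tau$ is an eigenvalue.

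Next I rule out such eigenvalues. Suppose $U = (u,v) \in D(A)$ is nonzero with $AU = i\tau U$. The first line gives $v = i\tau u$, and the second then gives $\Delta u + \tau^2 u = i\tau\, a\, u$. Multiplying by $\overline{u}$, integrating over $M$ (the Dirichlet condition kills the boundary term), and taking imaginary parts yields $\tau \int_M a|u|^2\,\ud x = 0$. If $\tau \neq 0$ this forces $\sqrt{a}\,u = 0$, so $a u = 0$ and $u$ vanishes on $F$ (where $a \ge \alpha > 0$ by (\ref{eq:hypothesis damping})); then $u$ solves $\Delta u + \tau^2 u = 0$ with $u|_F = 0$, and Lemma \ref{lemma:Helmholtz}, applied with $\omega = F$ and $\lambda = \tau^2$, gives $u \equiv 0$, whence $v = i\tau u = 0$, contradicting $U \neq 0$. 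If $\tau = 0$, then $v = 0$ and $\Delta u = 0$ with $u|_{\partial M} = 0$; pairing with $\overline{u}$ gives $\norm{\nabla u}_{L^2}^2 = 0$, so $u$ is constant and the Dirichlet condition forces $u = 0$, again a contradiction. Therefore $i\RR \cap \sigma(A) = \emptyset$.

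Combining the three steps proves the proposition. The genuinely nontrivial ingredient is the unique continuation principle of Lemma \ref{lemma:Helmholtz}, which is precisely what allows the conclusion $u \equiv 0$ from vanishing on the merely measurable set $F$ of positive measure; everything else is standard spectral-theoretic bookkeeping (compact resolvent, discreteness of the spectrum, continuity of the resolvent over a compact frequency window). Were that lemma not already at our disposal, I expect the main obstacle to be exactly this unique continuation from a set of positive measure rather than from an open set.
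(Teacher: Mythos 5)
Your proof is correct, but it takes a genuinely different route from the paper's. The paper proves the uniform bound directly by a contradiction argument on quasimodes: it takes sequences with $\|U_n\|_{\mathcal{H}}=1$, $|\tau_n|\le\tau_*$, $(A-i\tau_n)U_n\to 0$, extracts a limit via Rellich's theorem, uses Fatou's lemma to deduce $\int_M a|u|^2\,\ud x=0$ for the limit $u$, and then applies the same Lemma \ref{lemma:Helmholtz}; its dichotomy ($\tau=0$ handled by Poincar\'e, $\tau\neq 0$ by unique continuation) is exactly your eigenvalue dichotomy, but run on approximate solutions rather than exact ones. You instead reduce to exact spectrum: compact resolvent forces any imaginary spectrum to be an eigenvalue, you exclude eigenvalues by the same integration-by-parts computation plus Lemma \ref{lemma:Helmholtz}, and uniformity over $[-\tau_*,\tau_*]$ then comes for free from norm-continuity of the resolvent on the resolvent set. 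Both arguments rest on the same two ingredients (Rellich compactness and unique continuation from a set of positive measure), so neither is more elementary; but your packaging buys two things: it avoids the sequential bookkeeping (subsequences, a.e.\ convergence, Fatou), and it makes explicit a point the paper leaves implicit, namely that $(A-i\tau)^{-1}$ exists at all --- the paper's contradiction argument only yields injectivity with a bound, and surjectivity requires precisely your compact-resolvent/Fredholm observation, which is needed anyway to invoke Theorem \ref{thm:Burq}. What the paper's route buys is robustness: it uses only boundedness of $(u_n)$ in $H^1$ and the embedding $H^1\hookrightarrow L^2$, not the identification of $D(A)$ with $(H^2\cap H^1_0)\times H^1$ (hence not $H^2$ elliptic regularity for the Lipschitz metric) that underlies your compactness-of-the-resolvent step, and the same quasimode pattern transfers with minimal change to the Neumann quotient-space setting of Section \ref{sec:Neumann}. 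One small gloss on your write-up: dissipativity alone does not give generation of a contraction semigroup --- one also needs the range condition of Lumer--Phillips --- but that solvability is taken for granted by the paper in Section \ref{sec:Resolvent and the Helmholtz equation}, so relying on it is fair.
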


\begin{proof} In this case, we proceed by contradiction (we follow \cite[Sect. 4]{BurqJoly}). Assume that the exponential growth is not true, i.e., that there exist sequences $(U_n) \subset \mathcal{H}$ and $(\tau_n) \subset \mathbb{R}$ with $|\tau_n| \leq \tau_*$ such that 
\begin{equation}
\| U_n \|_{H^1(M) \times L^2(M)} = 1, \quad \forall n \in \NN, \qquad \textrm{and} \qquad (A - i \tau_n) U_n \rightarrow 0 \textrm{ as } n \rightarrow + \infty. 
\label{eq:lowfreq hypothesis}
\end{equation} Writing $U_n = \begin{pmatrix} u_n \\ v_n  \end{pmatrix}$, we have 
\be
v_n - i\tau_n u_n \rightarrow 0, \quad \textrm{in } H^1(M) \qquad \textrm{and} \qquad \Delta u_n - i\tau_n a(x) u_n + \tau_n ^2 u_n \rightarrow 0 \quad \textrm{in } L^2(M).
\label{eq:consequence low freq}
\ee Now multiplying the last limit by $\overline{u_n}$ and integrating by parts we find 
\be
-\| \nabla u_n \|_{L^2(M)}^2 - i \tau_n \int_M a(x) |u_n|^2 \ud x + \tau_n ^2 \| u_n \|_{L^2(M)}^2 \rightarrow 0. \nonumber
\ee Taking real and imaginary parts yields
\begin{equation}
-\| \nabla u_n \|_{L^2(M)}^2  + \tau_n^2 \| u_n \|_{L^2(M)}^2 \rightarrow 0, \qquad \textrm{and} \qquad \tau_n \int_M a(x) |u_n|^2 \ud x \rightarrow 0. 
\label{eq:consequence 2 low freq}
\end{equation} 
The sequence $(\tau_n)$ is bounded (in modulus) by $\tau_*$ and consequently  we can assume that it converges to some limit $\tau$. 
We distinguish now two cases. \par

\textbf{Case $\tau  = 0$.} In this case, we would have 
\be
\| \nabla u_n \|_{L^2(M)}^2 \rightarrow 0,  
\nonumber
\ee thanks to (\ref{eq:consequence 2 low freq}). Hence, by Poincaré's inequality, we would also have  
\be
\| u_n \|_{L^2(M)}^2 \rightarrow 0.  
\nonumber
\ee But then, the first part of (\ref{eq:consequence low freq}) would also imply 
\be
\| v_n \|_{L^2(M)}^2 \rightarrow 0.  
\nonumber
\ee Henceforth,
\be
\| U_n \|_{H^1(M)}^2 \rightarrow 0,  
\nonumber
\ee which is a contradiction with (\ref{eq:lowfreq hypothesis}).

\par
\textbf{Case $\tau \not = 0$.} In this case, using (\ref{eq:consequence 2 low freq}) we may write 
\be
\lim_{n \rightarrow \infty} \| \nabla u_n \|_{L^2(M)}^2 = \lim_{n \rightarrow \infty} \tau^2 \| u_n \|_{L^2(M)}^2, \text{ and } \lim_{n\rightarrow \infty} \int_M a(x) |u_n|^2(x) \ud x =0.
\nonumber
\ee Using (\ref{eq:lowfreq hypothesis}) we also have 
\be
\lim_{n \rightarrow \infty} \| v_n \|_{L^2(M)}^2 = \lim_{n \rightarrow \infty} \tau^2 \| u_n \|_{L^2(M)}^2.
\nonumber
\ee Then, as  
\begin{equation} \label{estim}
1= \lim_{n \rightarrow \infty} \| U_n \|_{H^1(M) \times L^2(M)}^2 = \lim_{n \rightarrow \infty} (1 + 2\tau^2) \| u_n \|_{L^2(M)}^2,
\end{equation} which means that the sequence $(u_n)$ is bounded in $H^1$. 
Then, Rellich's compactness theorem implies that there exists $u \in H^1(M)$ such that
\be
u_n \rightarrow u \quad \textrm{in }L^2(M)-strong, \qquad \textrm{and} \qquad \nabla u_n \rightarrow \nabla u \quad \textrm{in }L^2(M)-weak, 
\nonumber
\ee and a fortiori, upon extracting a subsequence, we also have
\be
u_n \rightarrow u \quad \textrm{a.e. } M. 
\nonumber
\ee Now, thanks to Fatou's lemma and the second part of (\ref{eq:consequence 2 low freq}) we deduce
\be
\int_M a(x) |u|^2 \ud x \leq \liminf_{n \rightarrow \infty} \int_M a(x) |u_n|^2 \ud x = 0 
\nonumber
\ee and hence, using (\ref{eq:hypothesis damping}),
\begin{equation*}
u = 0  \qquad \textrm{a.e. in } F. 
\end{equation*} This is enough to apply Lemma \ref{lemma:Helmholtz} which yields 
\be
u = 0  \qquad \textrm{a.e. in } M. 
\nonumber 
\ee But this is a contradiction with ~\eqref{estim}. This concludes the proof.

\end{proof}

\subsection{End of the proof of Theorem \ref{thm:decay} (Dirichlet boundary conditions)}

\label{sec:proof Dirichlet}

Once we have dealt with high and low frequencies in the previous sections, the proof of Theorem \ref{thm:decay} is a consequence of Theorem \ref{thm:Burq}.

\begin{proof}[Proof of Theorem \ref{thm:decay}]
Combining Proposition \ref{prop: high freq} and Proposition \ref{prop: low freq}, we find that the estimate 
\begin{equation*}
\| U \|_{H^1 \times L^2} \leq  C  e^{2 c|\tau|}  \| h \|_{H^1 \times L^2}
\end{equation*} holds for every $\tau \in \RR$, every $h \in H^1(M) \times L^2(M)$, $U = R(\tau)h$ and the constants
\begin{equation*}
C = C_{\ell} +  C_{h}, \qquad  c = c_{\ell}.
\end{equation*} As a consequence the function $M$ defined by (\ref{eq:function M}) satisfies in this case the growth
\begin{equation*}
M(\mu)  \leq C e^{2 c|\mu|},   
\end{equation*} for every $\mu \in \RR$. Hence, Theorem \ref{thm:Burq} yields that for any $k \in \NN$, 
\begin{equation}
\left\|  U(t) (Id - A)^{-k}  \right\|_{\mathcal{L}(H^1 \times L^2)} \leq \frac{C_k}{\log(2 + t)^k}, \qquad \forall t \geq 0.
\label{eq:decay waves en norme}
\end{equation} where we have used the notation of (\ref{eq:operator waves}) and (\ref{eq:semigroup waves}). Next, let $k \in \NN$ be fixed and let $h_0 = \begin{pmatrix}  u_0 \\ u_1    \end{pmatrix} \in H^{k+1}(M) \times H^{k}(M)$ Then, $W_0 := (Id - A)^{k} h_0 \in L^2(M)$ and inequality (\ref{eq:decay waves en norme}) implies 
\begin{align*}
\mathcal{E}_w(t,u_0, u_1) & = \left\|  U(t) h_0  \right\|^2_{H^1 \times L^2} \\
& = \left\|  U(t) (Id - A)^{-k} h_0  \right\|^2_{H^1 \times L^2} \\
& \leq \frac{C_k}{\log(2 + t)^{2k}}  \left\| W_0 \right\|^2_{H^1 \times L^2} \\
& \leq \frac{C_k}{\log(2 + t)^{2k}}  \left\| (Id - A)^{k} h_0 \right\|^2_{H^1 \times L^2},
\end{align*} for every $t \geq 0$. The proof is completed by noticing that for $k=1$ the domain of $A$ is $(H^2(M)\cap H^1_0 (M)) \times H^1_0 (M)$ with Dirichlet boundary conditions (resp. $H^2 (M) \times H^1(M)$ if $\partial M = \emptyset$), and 
$$\| (Id - A) h_0 \|^2_{H^1 \times L^2} \leq C \|  h_0 \|^2_{H^2 \times H^1}.$$

\end{proof}

\section{Proof of Theorem \ref{thm:decay}: the case of Neumann boundary conditions} \label{sec:Neumann}
In the case $\tau=0$, we cannot use Poincaré's inequality (that we used when dealing with the low frequencies in Proposition \ref{prop: low freq}) and we have to change slightly the functional framework. Here we follow the exposition in~\cite[Appendix]{BuGe20}. 
For the sake of completeness, we recall the argument (which is taken from~\cite[Appendix]{BuGe20}) below and focus on the low frequency regime $|\tau| \leq \tau^*$.

 For $s= 1,2$,  we define $\dot{H}^s = H^s(M) / \mathbb{R}$ the quotient space of $H^s(M) $ by the constant functions, endowed with the norm 
 $$ \| \dot{u} \| _{\dot{H}^1} = \| \nabla u\|_{L^2}, \qquad \| \dot{u} \| _{\dot{H}^2} = \| \Delta u\|_{L^2},$$
 (here $\dot{u}$ denotes the equivalence class of a function $u$ in $\dot{H}$).
 We define the operator 
  $$ \dot{A}= \begin{pmatrix} 0 & {\Pi} \\ \dot{\Delta} & -a \end{pmatrix}$$ 
  on $\dot{H}^1 \times L^2$ with domain $\dot{H}^2 \times H^1$, where $\Pi$ is the canonical projection $H^1 \rightarrow \dot{H}^1$ and $\dot{\Delta}$ is defined by 
  $$ \dot{\Delta} \dot{u} = \dot{( \Delta u)}$$ (independent of the choice of $u\in \dot{u}$). The operator $\dot{A}$ is maximal dissipative and hence defines a semi-group of contractions on $\dot{\mathcal{H}} = \dot{H^1} \times L^2$. Indeed for $U= \begin{pmatrix}\dot{u}\\ v\end{pmatrix}$, 
  $$ \Re \bigl( \dot{A} U, U\bigr)_{\dot{\mathcal{H}}} = \Re ( \nabla u, \nabla v)_{L^2} + ( \dot{\Delta}\dot{u} -a v, v)_{L^2} = - ( av,v) _{L^2},$$
  and 
  \begin{equation}
\begin{aligned} (\dot{A} -\text{Id} ) \begin{pmatrix}\dot{u}\\ v\end{pmatrix} = \begin{pmatrix}\dot f\\ g\end{pmatrix} &\Leftrightarrow \Pi v - \dot u = \dot f, \quad \textrm{and} \quad  \dot{\Delta} \dot{u} -(a+1) v = g\\
 & \Leftrightarrow \Pi v - \dot u = \dot f, \quad \textrm{and} \quad \Delta v - (1+a) v = g + \Delta f \in H^{-1} (M)
  \end{aligned}
  \end{equation}
  and we can solve this equation by variational theory. Notice that this shows that the resolvent $(\dot{A}- \text{Id} )^{-1}$ is well defined and continuous from $\dot{H}^1 \times L^2$ to $\dot{H}^2 \times H^1$. 
We have further:  \begin{lem}
  The injection $\dot{H}^2 \times {H}^1$ to $\dot{H}^1 \times L^2$ is compact.
  \end{lem}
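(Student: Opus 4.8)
The plan is to write the injection as a product of two maps and verify compactness of each factor separately. Denote by $J:\dot{H}^2\times H^1\to\dot{H}^1\times L^2$ the injection in question and split it as $J=J_1\times J_2$, where $J_1:\dot{H}^2\to\dot{H}^1$ and $J_2:H^1\to L^2$ are the two component inclusions. A product of two compact maps is compact (given a bounded sequence, extract a subsequence along which the first component converges, then a further subsequence along which the second converges), so it suffices to show that $J_1$ and $J_2$ are each compact. For $J_2$ this is precisely Rellich's compactness theorem on the compact manifold $M$, valid for the Lipschitz metric considered throughout.

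The only point requiring care is $J_1$, because the $\dot{H}^1$ and $\dot{H}^2$ norms are the homogeneous quantities $\|\nabla u\|_{L^2}$ and $\|\Delta u\|_{L^2}$ rather than full Sobolev norms. I would argue exactly as in the low-frequency analysis of \propref{prop: low freq}. Work with the zero-mean representative $u$ of each class (unique because $M$ is connected), so that the Poincar\'e--Wirtinger inequality $\|u\|_{L^2}\le C\|\nabla u\|_{L^2}$ is available. Given a sequence $(\dot u_n)$ bounded in $\dot{H}^2$, i.e. $\|\Delta u_n\|_{L^2}\le C$, the integration-by-parts identity $\|\nabla u_n\|_{L^2}^2=-(\Delta u_n,u_n)_{L^2}$ (valid with no boundary term for the Neumann realisation $\dot{\Delta}$, exactly as used just above in the dissipativity computation) combined with Poincar\'e--Wirtinger yields $\|\nabla u_n\|_{L^2}\le C$, so $(u_n)$ is bounded in $H^1$. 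By Rellich a subsequence converges in $L^2$, and then
\[ \|\nabla(u_n-u_m)\|_{L^2}^2=-(\Delta(u_n-u_m),u_n-u_m)_{L^2}\le\|\Delta(u_n-u_m)\|_{L^2}\,\|u_n-u_m\|_{L^2}\le 2C\,\|u_n-u_m\|_{L^2}\longrightarrow 0, \]
so that subsequence is Cauchy in $\dot{H}^1$. Hence $J_1$ is compact, and with it $J$.

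The main (and essentially only) obstacle is precisely the homogeneity of these norms: one cannot directly quote Rellich for $H^2\hookrightarrow H^1$, since $\|\nabla\cdot\|_{L^2}$ and $\|\Delta\cdot\|_{L^2}$ are only seminorms on $H^1$ and $H^2$, degenerate on the constants. The remedy is to restrict to zero-mean representatives, where connectedness of $M$ (equivalently the spectral gap $\lambda_1>0$) restores genuine norms and makes Poincar\'e--Wirtinger applicable; after this reduction everything follows from the classical compact embedding $H^1\hookrightarrow L^2$. Equivalently, expanding in the Neumann eigenbasis $(e_k)_{k\ge 0}$ of Section~\ref{sec:Notation} identifies $\dot{H}^s$ with $\overline{\mathrm{span}}\{e_k:k\ge1\}$ and gives $\|\dot u\|_{\dot H^1}^2=\sum_{k\ge1}\lambda_k^2|u_k|^2$, $\|\dot u\|_{\dot H^2}^2=\sum_{k\ge1}\lambda_k^4|u_k|^2$; then $(\lambda_k^{-2}e_k)_{k\ge1}$ is orthonormal in $\dot{H}^2$ and $J_1$ sends it to an orthogonal family in $\dot{H}^1$ of norms $\lambda_k^{-1}\to0$, exhibiting $J_1$ as a norm limit of finite-rank operators, hence compact.
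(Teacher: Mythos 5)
Your proof is correct, and it starts from the same identification as the paper's: the paper's entire proof is the single sentence that one identifies $\dot{H}^s$ with the kernel of the linear form $u \mapsto \int_M u$, i.e.\ with the zero-mean representatives you work with. The difference lies in what happens after that identification. The paper implicitly relies on the equivalence, on this zero-mean subspace, of the homogeneous norms $\|\Delta u\|_{L^2}$ and $\|\nabla u\|_{L^2}$ with the full $H^2$ and $H^1$ norms, and then on Rellich's theorem for the embedding $H^2 \hookrightarrow H^1$; the step $\|u\|_{H^2} \lesssim \|\Delta u\|_{L^2}$ is an elliptic-regularity statement for the Neumann Laplacian, which is not entirely innocent for the rough ($C^1 \cap W^{2,\infty}$, Lipschitz) metrics considered in this paper. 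Your factorization $J = J_1 \times J_2$ and the Cauchy-sequence argument for $J_1$ sidestep this entirely: you use only Poincar\'e--Wirtinger, integration by parts (legitimate for the Neumann realization, exactly as in the dissipativity computation preceding the lemma), and Rellich at the level $H^1 \hookrightarrow L^2$, never any $H^2$ bound. This is more elementary and arguably more robust in the low-regularity setting. Your spectral alternative (exhibiting $J_1$ as a norm limit of finite-rank operators in the Neumann eigenbasis) is also fine, with the caveat that the identity $\|\dot{u}\|_{\dot{H}^2}^2 = \sum_{k \geq 1} \lambda_k^4 |u_k|^2$ presupposes that $\dot{H}^2$ is the quotient of the domain of the Neumann Laplacian rather than of all of $H^2(M)$ --- but this is the same implicit convention the paper itself adopts, since otherwise $\|\Delta u\|_{L^2}$ would not even be a norm on $H^2(M)/\mathbb{R}$.
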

  This follows from identifying $\dot{H}^s$ with the kernel of the linear form $u \mapsto \int_{M} u$. We also have:
  
  \begin{cor}\label{coroA.4}
  The operator $(\dot{A}- \text{Id} )^{-1} $ is compact on $ \dot{\mathcal{H}}$.
  \end{cor}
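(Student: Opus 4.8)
The plan is to deduce the corollary directly from the two facts established immediately above, so that the whole content reduces to a factorization argument combined with the ideal property of compact operators. There is no new analytic work to do here; the substance sits entirely in the preceding Lemma.

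First I would record the mapping properties precisely. The variational solvability carried out in the preceding paragraph shows that $(\dot{A}-\text{Id})^{-1}$ is a \emph{bounded} linear operator from $\dot{\mathcal{H}} = \dot{H}^1 \times L^2$ into the finer space $\dot{H}^2 \times H^1$; in other words, elliptic regularity gains one derivative in each component of the resolvent. This is exactly the continuity statement noted just after the displayed equivalence, so no additional estimate is required.

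Next I would invoke the Lemma above, which asserts that the inclusion $\iota \colon \dot{H}^2 \times H^1 \hookrightarrow \dot{H}^1 \times L^2 = \dot{\mathcal{H}}$ is compact. Regarding $(\dot{A}-\text{Id})^{-1}$ as an operator on $\dot{\mathcal{H}}$, I would then write it as the composition of $\iota$ with the bounded map into $\dot{H}^2 \times H^1$. Since compact operators form a two-sided ideal inside the bounded operators, the composition of a bounded operator with a compact one is compact, and this is precisely the assertion.

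I do not expect a genuine obstacle at this level: all the real content lives in the Lemma, whose proof reduces, after identifying $\dot{H}^s$ with the kernel of the linear form $u \mapsto \int_M u$, to the classical Rellich–Kondrachov compactness of $H^2 \hookrightarrow H^1$ and $H^1 \hookrightarrow L^2$ on the compact manifold $M$. The only point deserving a word of care is that the quotient norms $\| \dot u\|_{\dot H^1} = \|\nabla u\|_{L^2}$ and $\|\dot u\|_{\dot H^2} = \|\Delta u\|_{L^2}$ are genuinely equivalent to the ambient Sobolev norms on that kernel, which is what legitimizes transporting Rellich–Kondrachov to the quotient; but that equivalence is supplied by the Lemma, so the corollary itself is immediate.
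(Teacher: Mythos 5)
Your argument is correct and is exactly the proof the paper intends: the resolvent is bounded from $\dot{\mathcal{H}}$ into $\dot{H}^2 \times H^1$ (by the variational solvability noted just before the Lemma), and composing with the compact injection $\dot{H}^2 \times H^1 \hookrightarrow \dot{\mathcal{H}}$ from the Lemma gives compactness on $\dot{\mathcal{H}}$. Nothing more is needed.
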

  On the other hand, it is very easy to show that for $(u_0, u_1) \in H^1\times L^2$, 
  $$\begin{pmatrix} \Pi & 0 \\ 0 & \text{Id} \end{pmatrix} e^{tA} = e^{t\dot{A}} \begin{pmatrix} \Pi & 0 \\ 0 & \text{Id} \end{pmatrix}, $$ and hence, the logarithmic decay  is equivalent to the logarithmic decay (in norm) of $e^{t\dot{A}}$ (and consequently, according to Theorem~\ref{thm} equivalent to resolvent estimates for $\dot{A}$). 
  The high frequency resolvent estimates in our new setting are  handled with the exact same proof as for Dirichlet boundary conditions (we did not use Poincaré's inequality in this regime) and consequently we omit the proof.   Let us focus on the low frequency regime and revisit our proof above in this new functional setting. We prove
  \begin{prop} Let $F \subset E \subset M$ and a damping $a$ satisfying (\ref{eq:hypothesis damping}). For any $\tau_*>0$ (we shall choose $\tau_*$ given by Proposition \ref{prop: high freq}), there exist constants $C >0$ such that for any $|\tau| < \tau_*$. and  every $h = (f,g) \in \mathcal{H}$ every $U$ such that $U = (\dot{A} - i \tau)^{-1} h$. 
\be
\| U \|_{\dot{H}^1(M) \times L^2(M)} \leq C \| (f,g ) \|_{\dot{H}^1(M) \times L^2(M)},
\label{eq:estimate low frequencies bis}
\ee 
\label{prop: low freq bis} 
\end{prop}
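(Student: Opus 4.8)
The plan is to adapt the contradiction argument of Proposition \ref{prop: low freq} (following \cite[Sect. 4]{BurqJoly}) to the quotient space $\dot{\mathcal{H}} = \dot{H}^1 \times L^2$. Suppose \eqref{eq:estimate low frequencies bis} fails: then there are sequences $U_n = (\dot{u}_n, v_n)$ and $\tau_n$ with $|\tau_n| \leq \tau_*$, $\|U_n\|_{\dot{H}^1 \times L^2} = 1$ and $(\dot{A} - i\tau_n)U_n \to 0$ in $\dot{\mathcal{H}}$. Choosing the mean-zero representative $u_n$ of $\dot{u}_n$, the resolvent relation unwinds to
$$\nabla v_n - i\tau_n \nabla u_n \to 0 \text{ in } L^2, \qquad \Delta u_n - (a + i\tau_n) v_n \to 0 \text{ in } L^2,$$
the first being exactly the statement that $\Pi v_n - i\tau_n \dot{u}_n \to 0$ in $\dot{H}^1$. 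The Poincar\'e--Wirtinger inequality bounds $\|u_n\|_{L^2} \lesssim \|\nabla u_n\|_{L^2} \leq 1$, and $\|v_n\|_{L^2} \leq 1$ by definition of the norm; after extraction one has $\tau_n \to \tau$.

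The first key step is to test the second relation against $\overline{v_n}$ and integrate by parts (the Neumann condition on $u_n$ kills the boundary term). Substituting $\nabla v_n = i\tau_n \nabla u_n + o(1)$ from the first relation, the real and imaginary parts give, respectively,
$$\int_M a |v_n|^2 \ud x \to 0 \qquad \text{and} \qquad \tau_n\left( \|\nabla u_n\|_{L^2}^2 - \|v_n\|_{L^2}^2 \right) \to 0.$$
Since $\|\nabla v_n\|_{L^2}$ is now bounded, $v_n$ is bounded in $H^1$; together with the $H^1$-bound on $u_n$, Rellich's theorem yields, up to a subsequence, strong $L^2$ limits $u_n \to u$ and $v_n \to v$, with $\nabla u_n \rightharpoonup \nabla u$ and $\nabla v_n \rightharpoonup \nabla v$ weakly. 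Passing to the limit, $\nabla v = i\tau \nabla u$, so $v - i\tau u \equiv c$ is constant, while $\int_M a|v|^2 = 0$; as $a \geq 0$ this forces $v = 0$ a.e. on $\{a > 0\} \supseteq F$, whence $a v \equiv 0$ and the limiting equation reads $\Delta u = i\tau v$.

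It then remains to show $v \equiv 0$, which splits into two cases. If $\tau = 0$ then $v = c$ is constant and $av \equiv 0$ with $\int_M a \ud x \geq \alpha\, \vol_g(F) > 0$ forces $c = 0$, so $v = 0$. If $\tau \neq 0$, then $\Delta u = i\tau v = -\tau^2 u + i\tau c$, so $\hat{u} := u - ic/\tau$ solves the homogeneous Helmholtz equation $\Delta \hat{u} + \tau^2 \hat{u} = 0$; moreover $v = 0$ on $F$ gives $u = ic/\tau$, i.e. $\hat{u} = 0$ a.e. on $F$. The unique continuation principle of Lemma \ref{lemma:Helmholtz} then yields $\hat{u} \equiv 0$, so $u$ is constant; since each $u_n$ has zero mean, so does $u$, hence $u \equiv 0$, $c = 0$ and again $v = 0$. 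In either case $v_n \to 0$ in $L^2$; feeding this back into the second relation tested against $\overline{u_n}$ gives $\|\nabla u_n\|_{L^2}^2 = -\Re\int_M (a + i\tau_n) v_n \overline{u_n} \ud x \to 0$, so that $\|U_n\|_{\dot{H}^1 \times L^2}^2 = \|\nabla u_n\|_{L^2}^2 + \|v_n\|_{L^2}^2 \to 0$, contradicting $\|U_n\| = 1$.

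The main obstacle is the loss of $L^2$-control inherent to the quotient: the first resolvent relation only constrains $\nabla(v_n - i\tau_n u_n)$, so one cannot substitute $v_n \approx i\tau_n u_n$ in $L^2$ as in the Dirichlet proof, and the limiting profile $u$ is pinned down only up to an additive constant. Testing against $\overline{v_n}$ rather than $\overline{u_n}$ circumvents the first difficulty by producing $\int_M a |v_n|^2 \to 0$ with no uncontrolled constant, and the positivity $\int_M a > 0$ — which here replaces the Poincar\'e inequality used in the Dirichlet case — together with the mean-zero normalisation disposes of the additive constant before and after invoking unique continuation.
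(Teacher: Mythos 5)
Your proof is correct, and while it follows the same broad compactness-and-contradiction skeleton as the paper (normalise, extract $\tau_n \to \tau$, split into $\tau=0$ and $\tau\neq 0$, conclude by unique continuation), the mechanics differ in genuinely interesting ways. Where you test the second resolvent relation against $\overline{v_n}$ to extract $\int_M a|v_n|^2\ud x \to 0$ and get the $H^1$-bound on $v_n$ for free from $\nabla v_n = i\tau_n \nabla u_n + o(1)_{L^2}$, the paper instead eliminates $u_n$ altogether and derives a closed elliptic equation $\Delta v_n - i\tau(a+i\tau)v_n = o(1)_{H^{-1}}$, bootstrapping the $H^1$-bound from $\Delta v_n$ bounded in $H^{-1}$; the paper's route keeps the whole argument at the level of $v_n$, which lives in a genuine $L^2$ space, and thereby never has to track the additive constant or choose a mean-zero representative, which is precisely the bookkeeping your argument must carry through (and does, correctly). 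The more substantive divergence is the unique continuation input in the case $\tau\neq 0$: you apply Lemma \ref{lemma:Helmholtz} to the shifted limit $\hat u = u - ic/\tau$, which vanishes on the positive-measure set $F$, whereas the paper applies to the limit $v$ the fact that nontrivial Laplace eigenfunctions have nodal sets of measure zero. Your choice is arguably the more self-contained one here: it reuses the lemma already stated and needed for the Dirichlet low-frequency case, whereas the nodal-set fact invoked by the paper is, in the low regularity setting $C^1\cap W^{2,\infty}$ of this paper, itself most naturally justified by the same Logunov--Malinnikova propagation-of-smallness results underlying Lemma \ref{lemma:Helmholtz}. One small point of rigour common to both arguments (and glossed over in both): to apply unique continuation one should note that the limit ($\hat u$ for you, $v$ for the paper) inherits the Neumann boundary condition, e.g.\ via weak $H^2$-compactness coming from the $L^2$-bound on $\Delta u_n$.
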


\begin{proof}
  We argue by contradiction. Suppose there exist sequences $(\tau_n), (U_n), (F_n)$ such that 
  $$ (\dot{A} - i \tau_n) U_n = F_n, \qquad  \|U_n\|_{\dot{\mathcal{H}} } > n \| F_n\|_{\dot{\mathcal{H}} }.$$ 
  Since $U_n \neq 0$, we can assume $\| U_n\|_{\dot{\mathcal{H}} }=1$. 
  Extracting subsequences (still indexed by $n$ for simplicity) we can also assume that $\tau_n \rightarrow \tau \in \mathbb{R}$ as $n\rightarrow  \infty $. We write 
  $$U_n= \begin{pmatrix} \dot{u}_n \\ v_n\end{pmatrix}, \qquad  F_n= \begin{pmatrix} \dot{f}_n \\ g_n\end{pmatrix},$$ and distinguish according to two cases. 
 
 \textbf{ Zero frequency case: $\tau =0$. }
  In this case, we have 
 $$ \dot{A} U_n = o(1)_{\dot{\mathcal{H}}} \Leftrightarrow \Pi v_n = o(1)_{\dot{H}^1}, \quad \Delta \dot{u}_n - a v_n = o(1)_{L^2}.$$
 We deduce that there exists $(c_n) \subset \mathbb{C}$ such that 
 $$ v_n - c_n = o(1)_{H^1}, \qquad \Delta u_n - a c_n = o(1)_{L^2}.$$
 But 
 $$ 0= \int_M \Delta u_n \ud x \Rightarrow c_n \int_M a \ud x = o(1) \Rightarrow c_n = o(1).$$
 As a consequence, we get $ v_n = o(1) _{L^2}$ and $\Delta u_n = o(1)_{L^2}$, which implies that $\dot{u} _n = o(1) _{\dot{H} ^1}$. This contradicts  $\| U_n\|_{{\dot{\mathcal{H}} }}=1$. As a result (\ref{eq:estimate low frequencies bis}) follows for $\tau = 0$. \par 
 
 \vspace{0.5em}
 
 \textbf{ Low (nonzero) frequency case: $\tau \in \mathbb{R}^*$.} 
 In this case, we have 
 $$ (\dot{A}-i \tau)  U_n = o(1)_{\dot{\mathcal{H}}} \Leftrightarrow \Pi v_n  - i \tau \dot{u} _n= o(1)_{\dot{H}^1}, \quad \Delta \dot{u}_n - ( i \tau +a)  v_n = o(1)_{L^2}.$$
 We deduce 
 $$ \Delta v_n - i\tau ( a+ i\tau) v_n = o(1)_{L^2} + \Delta (o(1) _{\dot{H}^1} ) = o(1)_{H^{-1}}.$$
 Since $(v_n)$ is bounded in $L^2$, from this equation, we deduce that $(\Delta v_n)$ is bounded in $H^{-1} $ and consequently $(v_n)$ is bounded in $H^1$. Extracting another subsequence, we can assume that $(v_n)$ converges in $L^2$ to $v$ which satisfies
 $$ \Delta v + \tau ^2 v - i\tau av =0, \qquad \textrm{in } M.$$
 Taking the imaginary part of the scalar product with $\overline{v}$ in $L^2$ gives (since $\tau \neq 0$) 
 $ \int_{M} a | v|^2 \ud x=0$, and consequently $av =0$ which implies that $v$ is an eigenfunction of the Laplace operator. But since the zero set of non trivial eigenfunctions has Lebesgue measure $0$ in $M$,  $av=0$ implies that $v=0$ (and consequently $v_n = o(1)_{L^2}$). Now, we have 
 $$ \Delta \dot{u}_n = (i\tau +a ) v_n + o(1)_{L^2} = o(1)_{L^2} \Rightarrow \dot{u}_n = o(1) _{\dot{H}^1},$$
 but this contradicts $\| U_n\|_{\dot{\mathcal{H}} }=1$ and (\ref{eq:estimate low frequencies bis}) follows also in this case. This ends the proof.

\end{proof}

Once we have established Proposition \ref{prop: low freq bis}, the proof of Theorem \ref{thm:decay} with Neumann boundary conditions follows the same lines of Section \ref{sec:proof Dirichlet} without significant modifications. This ends the proof of Theorem \ref{thm:decay}.

\section{Proof of Theorem \ref{energy decay Schrodinger}: Schr\"odinger equation}
\label{sec:Proof of Theorem schrodinger}
In order to prove Theorem \ref{energy decay Schrodinger} it is enough to prove the following resolvent estimates. 
\begin{prop}There exists $C>0$ such for any $\tau \in \mathbb{R}$, that the operator 
$$(\Delta - \tau + ia): D(A) \rightarrow L^2(M)$$
is invertible with bounded inverse 
$$ \| (\Delta - \tau + ia)^{-1} \|_{\mathcal{L}(L^2(M))} \leq C e^{c \sqrt{|\tau|}}.$$
\end{prop}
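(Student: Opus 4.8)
The plan is to solve, for an arbitrary right-hand side $F\in L^2(M)$, the equation $(\Delta-\tau+ia)\psi=F$ with $\psi$ in the domain $D(A)$ (namely $H^2(M)$, or $H^2(M)\cap H^1_0(M)$ under Dirichlet conditions), and to establish the a priori bound $\|\psi\|_{L^2}\le C e^{c\sqrt{|\tau|}}\|F\|_{L^2}$. Invertibility then follows essentially for free: the shifted Laplacian $\Delta-\tau\colon D(A)\to L^2(M)$ is self-adjoint with compact resolvent, hence Fredholm of index zero for every $\tau$, and adding the bounded (relatively compact) perturbation $ia$ preserves this; since the a priori estimate gives injectivity (take $F=0$), Fredholm index zero upgrades it to bijectivity and controls the inverse. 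Equivalently, $i(\Delta-\tau+ia)=A-i\tau$ with $A$ as in \eqref{eq:operator Schrodinger}, so the statement is just a resolvent bound for $A$ on the imaginary axis. Following the outline I treat the two regimes $\tau\ge 0$ and $\tau<0$ separately, the latter being the substantial one.

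For $\tau\ge 0$ I pair the equation with $\overline{\psi}$ and integrate by parts, obtaining $-\|\nabla\psi\|_{L^2}^2-\tau\|\psi\|_{L^2}^2+i\|\sqrt a\,\psi\|_{L^2}^2=\int_M F\overline{\psi}\,\ud x$. Its real part gives $\|\nabla\psi\|_{L^2}^2+\tau\|\psi\|_{L^2}^2\le\|F\|_{L^2}\|\psi\|_{L^2}$, which already yields $\|\psi\|_{L^2}\le\|F\|_{L^2}/\tau$ as soon as $\tau$ exceeds a fixed positive threshold. On the remaining compact range I argue by contradiction and compactness as in Proposition \ref{prop: low freq}: a normalised sequence $\psi_n$ with $(\Delta-\tau_n+ia)\psi_n\to 0$ and $\tau_n\to\tau_0\ge 0$ has $\nabla\psi_n$ bounded (real part) and $\sqrt a\,\psi_n\to 0$ (imaginary part), so Rellich's theorem produces a strong $L^2$ limit $\psi$ with $\|\psi\|_{L^2}=1$ solving $-\Delta\psi=-\tau_0\psi$ together with $a\psi=0$. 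As the eigenvalues of $-\Delta$ are nonnegative, $-\tau_0\ge 0$ forces $\tau_0=0$, so $\psi$ is constant; then $a\psi=0$ with $\int_M a>0$ forces $\psi=0$, a contradiction. Hence the resolvent is uniformly bounded for $\tau\ge 0$, which is much stronger than claimed.

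The case $\tau<0$ is the heart of the matter, and the only place where the spectral inequality enters. Setting $\mu=\sqrt{|\tau|}$, so that $\mu^2=|\tau|$, the equation reads $\Delta\psi+\mu^2\psi=S$ with source $S=F-ia\psi\in L^2(M)$, precisely the form handled by Proposition \ref{prop:Helmholtz} with $\omega=F$; it is exactly here that the linear dependence of the spectral parameter on $\tau$ (as opposed to the quadratic dependence in the wave case) produces the exponent $\sqrt{|\tau|}$. Using the hypothesis \eqref{eq:hypothesis damping} to bound $\|S\|_{L^2}\le\|F\|_{L^2}+\sqrt\beta\,\|\sqrt a\,\psi\|_{L^2}$ and $\|1_F\psi\|_{L^2}\le\frac{\sqrt\beta}{\alpha}\|\sqrt a\,\psi\|_{L^2}$, Proposition \ref{prop:Helmholtz} gives $\|\psi\|_{L^2}\le Ce^{D\sqrt{|\tau|}}\bigl(\|F\|_{L^2}+\|\sqrt a\,\psi\|_{L^2}\bigr)$. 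To close the loop I read off the damped norm from the imaginary part of the energy identity, $\|\sqrt a\,\psi\|_{L^2}^2=\Im\int_M F\overline{\psi}\,\ud x\le\|F\|_{L^2}\|\psi\|_{L^2}$, so $\|\sqrt a\,\psi\|_{L^2}\le\|F\|_{L^2}^{1/2}\|\psi\|_{L^2}^{1/2}$; substituting and applying Young's inequality to absorb a multiple of $\|\psi\|_{L^2}$ into the left-hand side leaves $\|\psi\|_{L^2}\le C'e^{2D\sqrt{|\tau|}}\|F\|_{L^2}$, the desired bound with $c=2D$.

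The main obstacle is this negative-frequency regime, and within it the bookkeeping of the absorption step: one must verify that the single uncontrolled quantity $\|\sqrt a\,\psi\|_{L^2}$ introduced by the spectral inequality, once squared against the imaginary part of the energy identity and paired with the exponential weight through Young's inequality, can be reabsorbed without leaving behind any spurious power of $e^{\sqrt{|\tau|}}$ or of $|\tau|$. The argument parallels the high-frequency wave estimate of Proposition \ref{prop: high freq}, but replacing $|\tau|$ by $\sqrt{|\tau|}$ in the exponent — a direct consequence of the Schr\"odinger scaling — is what, through the second alternative of Theorem \ref{thm:Burq}, upgrades the time decay from $\log(2+t)^{-2}$ to $\log(2+t)^{-4}$.
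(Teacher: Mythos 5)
Your argument is correct, and on the substantial half --- $\tau<0$ --- it coincides with the paper's Proposition \ref{prop:Schrodinger negative}: the same reduction to a Helmholtz equation with $\mu=\sqrt{|\tau|}$, the same application of Proposition \ref{prop:Helmholtz} with $\omega=F$ and source containing $ia\psi$, the same control of $\|\sqrt a\,\psi\|_{L^2}$ through the imaginary part of the energy identity, and the same absorption via Young's inequality, landing on the exponent $c=2D$. Where you genuinely diverge is the regime $\tau\ge 0$ and the question of invertibility. The paper (Proposition \ref{prop:Schrodinger positive}) first proves the damped Poincar\'e-type inequality \eqref{eq:Poincare} by a Rellich compactness/contradiction argument, and then obtains the uniform bound $\|\psi\|_{H^1}\le 2C_P\|f\|_{L^2}$ for \emph{all} $\tau\ge 0$ in one stroke, by observing that the modulus of the left-hand side of the energy identity dominates $\frac12\int_M\bigl(|\nabla\psi|^2+a|\psi|^2\bigr)\ud x$. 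You instead dispose of large $\tau$ by the trivial estimate $\tau\|\psi\|_{L^2}^2\le\|F\|_{L^2}\|\psi\|_{L^2}$ and run the compactness/contradiction argument directly on the resolvent family over the remaining compact range of $\tau$, in the style of Proposition \ref{prop: low freq}; the limiting equation plus $a\psi=0$ and $\int_M a>0$ then force $\psi=0$, exactly as in the paper's low-frequency wave argument. The two routes rest on the same compactness input; the paper's has the small advantage of packaging it into a reusable inequality whose constant $2C_P$ appears explicitly in the final bound, while yours avoids the auxiliary lemma altogether. Finally, you make explicit something the paper leaves implicit: that the a priori estimates upgrade to actual invertibility of $(\Delta-\tau+ia):D(A)\to L^2(M)$, via the index-zero Fredholm property of the self-adjoint operator $\Delta-\tau$ (compact resolvent) and its stability under the relatively compact perturbation $ia$, with injectivity supplied by the estimates. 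That addition is sound and worth recording.
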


\subsection{Estimates when  $\tau < 0$} 
\label{sec:negative frequencies} 

In this case, we may use Proposition \ref{prop:Helmholtz} directly. We get the following result.

\begin{prop}
Assume that (\ref{eq:hypothesis damping}) holds for some $\alpha,\beta$ and let $f \in L^2(M;\CC)$ be given. Then, for any $\tau < 0 $, the resolvent $(\Delta - \tau + ia)^{-1}$ satisfies 
\begin{equation*}
\| (\Delta - \tau + ia)^{-1}f \|_{L^2(M)} \leq 2 (1 + C)^2 \left(  1  +   \beta  (1 + \frac{1}{\alpha} )^2   \right) e^{2 D\sqrt{|\tau|}}  \| f \|_{L^2},
\end{equation*} where $C,D$ are the constants in (\ref{eq:Helmholtz estimate}). 
\label{prop:Schrodinger negative}
\end{prop}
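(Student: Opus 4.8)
The plan is to estimate $\| (\Delta - \tau + ia)^{-1}f \|_{L^2}$ for $\tau < 0$ by setting $u = (\Delta - \tau + ia)^{-1}f$, so that $u$ solves the Helmholtz equation
\begin{equation*}
\Delta u + |\tau| u = f - iau, \qquad \textrm{in } M,
\end{equation*}
using $-\tau = |\tau| > 0$. This is exactly of the form treated by Proposition~\ref{prop:Helmholtz}, with $\mu = \sqrt{|\tau|}$ (so $\mu^2 = |\tau|$), source $S = f - iau$, and $\omega = F$. Applying the estimate (\ref{eq:Helmholtz estimate}) gives
\begin{equation*}
\| u \|_{L^2(M)} \leq C e^{D\sqrt{|\tau|}} \left( \| f - iau \|_{L^2(M)} + \| 1_F u \|_{L^2(M)} \right).
\end{equation*}

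First I would control the right-hand side in terms of $\| f\|_{L^2}$ and the damping term $\| \sqrt{a}\, u\|_{L^2}$. Bounding $\| f - iau\|_{L^2} \leq \| f\|_{L^2} + \sqrt{\beta}\,\| \sqrt{a}\, u\|_{L^2}$ (using $a \leq \beta$) and $\| 1_F u\|_{L^2} \leq \frac{\sqrt{\beta}}{\alpha}\| \sqrt{a}\, u\|_{L^2}$ (using $\alpha 1_F \leq a$), exactly as in the proof of Proposition~\ref{prop: high freq}, I obtain
\begin{equation*}
\| u \|_{L^2(M)} \leq C e^{D\sqrt{|\tau|}} \left( \| f\|_{L^2(M)} + \sqrt{\beta}\Bigl(1 + \tfrac{1}{\alpha}\Bigr)\| \sqrt{a}\, u\|_{L^2(M)} \right).
\end{equation*}

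The main step is then a separate energy estimate for $\| \sqrt{a}\, u\|_{L^2}$, which I expect to be the crux. Multiplying the Helmholtz equation by $\overline{u}$, integrating, and taking imaginary parts isolates the damping term: since $-\| \nabla u\|^2 + |\tau|\| u\|^2$ is real, the imaginary part of $\int_M(\Delta u + |\tau| u)\overline{u}$ vanishes, giving $\| \sqrt{a}\, u\|_{L^2}^2 = \Im \int_M f\,\overline{u}\,\ud x \leq \| f\|_{L^2}\| u\|_{L^2}$. The sign of $\tau < 0$ is what makes this work cleanly here (no factor of $\tau$ multiplies the damping term, unlike the wave case). Substituting this quadratic bound $\| \sqrt{a}\, u\|_{L^2} \leq \| f\|_{L^2}^{1/2}\| u\|_{L^2}^{1/2}$ into the previous inequality and absorbing via Young's inequality ($xy \leq \frac14\eps^{-1}x^2 + \eps y^2$, choosing $\eps$ to absorb a factor $\tfrac12\| u\|_{L^2}$) yields a closed bound on $\| u\|_{L^2}$ of the form $\| u\|_{L^2} \leq 2(1+C)^2(1 + \beta(1+\tfrac{1}{\alpha})^2)e^{2D\sqrt{|\tau|}}\| f\|_{L^2}$, matching the stated constant. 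The only delicate point is tracking the constants carefully through the absorption so that the exponent $e^{2D\sqrt{|\tau|}}$ (not $e^{D\sqrt{|\tau|}}$) and the precise prefactor emerge; this is routine once the energy identity is in hand.
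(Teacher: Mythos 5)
Your proposal is correct and follows essentially the same route as the paper: apply Proposition~\ref{prop:Helmholtz} with $\mu = \sqrt{|\tau|}$ and source containing the damping term, bound $\| 1_F u\|_{L^2}$ and $\| a u\|_{L^2}$ by $\| \sqrt{a}\,u\|_{L^2}$ via (\ref{eq:hypothesis damping}), derive $\| \sqrt{a}\,u\|_{L^2}^2 = \Im \int_M f\,\overline{u}\,\ud x$ from the energy identity, and close the estimate by Young's inequality with an $\epsilon$ of size $e^{-2D\sqrt{|\tau|}}$ to absorb the $\| u\|_{L^2}$ term. The only cosmetic difference is a factor of $-i$ in the source term (the paper works with the equation $\Delta \psi - \tau\psi + ia\psi = -if$ coming from $(A - i\tau)^{-1}$), which does not affect any $L^2$ norm.
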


\begin{proof}
Recall that $\psi = (\Delta - \tau + ia)^{-1}f$ satisfies the Helmholtz equation (\ref{eq:Helmholtz Schrodinger}). 
Then, thanks to (\ref{eq:Helmholtz estimate}) there exist some constants $C,D>0$ independent of $\tau$ such that 
\begin{equation*}
\| \psi \|_{L^2(M)} \leq Ce^{D\sqrt{|\tau|}} \left(  \|i(f + a\psi) \|_{L^2(M)} + \| 1_{\omega} \psi \|_{L^2(M)} \right).
\end{equation*} On the other hand, (\ref{eq:hypothesis damping}) implies 
\begin{equation*}
\| 1_F \psi \|_{L^2(M)} \leq \frac{\sqrt{\beta}}{\alpha} \| \sqrt{a} \psi \|_{L^2(M)}
\end{equation*} and 
\begin{equation*}
\| a \psi \|_{L^2(M)} \leq \sqrt{\beta}  \| \sqrt{a} \psi \|_{L^2(M)}.
\end{equation*} As a result, we get 
\begin{equation}
\| \psi \|_{L^2(M)} \leq Ce^{D\sqrt{|\tau|}} \left( \| f \|_{L^2} + \left(  1 +  \frac{1}{\alpha}  \right) \sqrt{\beta}  \| \sqrt{a} \psi \|_{L^2(M)} \right).
\label{eq:thmSchrodinger main}
\end{equation}
Next, using the Helmholtz equation (\ref{eq:Helmholtz Schrodinger}) we obtain
\begin{equation*}
\int_M (\Delta \psi  -  \tau \psi ) \overline{\psi} \ud x  =  \int_M \left(  -if   - i a  \psi \right) \overline{\psi} \ud x
\end{equation*} and hence,
\begin{equation*}
\int_M |\nabla_x \psi|^2 \ud x + \tau \int_M  | \psi |^2 \ud x  =  i \int_M  f \overline{\psi} \ud x +  i \int_M a  |\psi|^2 \ud x.
\end{equation*} Now, taking the imaginary part and using Cauchy-Schwarz's and Young's inequalities we find 
\begin{align*}
 \left\| \sqrt{a} \psi  \right\|^2_{L^2(M)}  & = \Im \int_M f \overline{\psi} \ud x \\ 
 & \leq  \| f \|_{L^2(M)} \| \psi \|_{L^2(M)} \\
 & \leq \frac{1}{4 \epsilon} \| f \|_{L^2(M)}    +    \epsilon \| \psi \|_{L^2(M)},
\end{align*} for every $\epsilon > 0$. Injecting this in (\ref{eq:thmSchrodinger main}) yields 
\begin{equation*}
\| \psi \|_{L^2(M)} \leq Ce^{D\sqrt{|\tau|}} \left(  1  +   \sqrt{\beta} (1 + \frac{1}{\alpha} ) \frac{1}{2 \sqrt{\epsilon}}  \right) \| f \|_{L^2}  + Ce^{D\sqrt{|\tau|}}  \sqrt{\beta  \epsilon } (1 + \frac{1}{\alpha} )  \| \psi \|_{L^2}.
\end{equation*} Next, choosing 
\begin{equation*}
\epsilon = \frac{  e^{-2 D\sqrt{|\tau|}}  }{4   \left(  1 +  \frac{1}{\alpha}  \right)^2 \beta C^2  }
\end{equation*} we get 
\begin{equation*}
\| \psi \|_{L^2(M)} \leq 2 Ce^{D\sqrt{|\tau|}} \left(  1  +   \beta  (1 + \frac{1}{\alpha} )^2 C e^{D\sqrt{|\tau|}}  \right) \| f \|_{L^2}.
\end{equation*} As a consequence, we get the exponential growth estimate
\begin{equation*}
\| \psi \|_{L^2(M)} \leq 2 (1 + C)^2 \left(  1  +   \beta  (1 + \frac{1}{\alpha} )^2   \right) e^{2 D\sqrt{|\tau|}}  \| f \|_{L^2},
\end{equation*} for any $\tau > 0 $ given.

\end{proof}

\subsection{Estimates when $0 \leq \tau $} 
\label{sec:positive frequencies} 
In this section we shall just rely on the following Poincaré-type inequality:
\begin{prop}
Assume that $a\geq 0$ and $\int_M a(x) dx >0$. Then there exists $C_P = C_P(a)>0$ such that for all $u\in H^1(M)$,
\begin{equation}
C_P \int_M ( |\nabla_x u |^2(x)  + a(x) |u|^2(x) ) dx \geq  \| u \| _{H^1(M)}^2.
\label{eq:Poincare}
\end{equation}
\end{prop}

\begin{proof}

We follow a standard proof and argue by contradiction.
Otherwise, there would exist a sequence $(u_n) \in H^1(M)$ (that we can assume of norm $1$ in $H^1$) such that
$$ \int_M ( |\nabla_x u_n |^2(x)  + a(x) |u_n|^2(x) ) dx \leq \frac 1 n \| u_n\|^2_{H^1(M)}.$$
Since $(u_n)$ is bounded in $H^1(M)$ and $M$ is compact, by Rellich's compactness theorem there exists $u\in H^1$ such that we can extract a subsequence (still denoted by $(u_n)$) such that 
$$ \|u_n -u \|_{L^2} \rightarrow 0, \qquad \textrm{as } n \rightarrow + \infty.$$
Moreover, as
$$\| \nabla_x u_n\|_{L^2} \rightarrow 0, \qquad \|\sqrt{a} u_n \|_{L^2(M)} \rightarrow 0,$$
 we deduce that $\nabla_x u =0 $ and thus $u$ must be constant in $M$. But
 $$|u|^2 \int_M a(x) dx = \int_M a(x) |u|^2 (x) dx =\lim_{n\rightarrow + \infty}\int_M a(x) |u_n|^2 (x) dx,$$
 which implies that $u=0$. This gives a contradiction with the fact that 
 $$ \| u_n\|_{H^1}=1, \quad \|\nabla_x u_n \|_{L^2} \rightarrow 0, \quad \| u_n -u\|_{L^2} \rightarrow 0 \qquad \Rightarrow \qquad \| u\| _{L^2} =1.
 $$  Hence, (\ref{eq:Poincare}) follows for some positive constant $C_P$.

\end{proof}

\begin{prop}
Assume that (\ref{eq:hypothesis damping}) holds for some $\alpha,\beta$ and let $f \in L^2(M;\CC)$ be given. Then, there exists $C>0$ such that for  $\tau \geq 0 $,and any $f\in L^2(M)$, we have  
\begin{equation*}
\| ( \Delta - \tau + i a(x)) ^{-1} f \|_{L^2(M)} \leq  2C_P  \| f \|_{L^2(M)},
\end{equation*} where $C_P$ is  the Poincaré's constant above. 
\label{prop:Schrodinger positive}
\end{prop}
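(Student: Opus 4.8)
The goal is to prove the resolvent bound $\| (\Delta - \tau + ia)^{-1} f\|_{L^2} \leq 2 C_P \|f\|_{L^2}$ for $\tau \geq 0$, where $C_P$ is the Poincaré-type constant from the preceding proposition. The plan is to work directly with the Helmholtz equation \eqref{eq:Helmholtz Schrodinger} satisfied by $\psi = (\Delta - \tau + ia)^{-1} f$, namely $\Delta \psi - \tau \psi + i a \psi = -if$, and extract the needed control by pairing against $\overline{\psi}$ and integrating. First I would multiply the equation by $\overline{\psi}$, integrate over $M$, and integrate by parts to obtain
\begin{equation*}
-\int_M |\nabla_x \psi|^2 \ud x - \tau \int_M |\psi|^2 \ud x + i \int_M a |\psi|^2 \ud x = -i \int_M f \overline{\psi} \ud x.
\end{equation*}
Taking the real part gives $\int_M |\nabla_x \psi|^2 \ud x + \tau \int_M |\psi|^2 \ud x = \Im \int_M f \overline{\psi} \ud x$, and taking the imaginary part gives $\int_M a |\psi|^2 \ud x = -\Re \int_M f \overline{\psi} \ud x$ (up to a sign convention).

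The key point is that because $\tau \geq 0$, the term $\tau \int_M |\psi|^2$ has the \emph{favourable} sign and can simply be dropped from the real-part identity, leaving $\int_M |\nabla_x \psi|^2 \ud x \leq \left| \Im \int_M f \overline{\psi} \ud x \right|$. Combining the real and imaginary identities, I would bound both $\int_M |\nabla_x \psi|^2$ and $\int_M a |\psi|^2$ by $\int_M |f\overline{\psi}| \ud x \leq \|f\|_{L^2} \|\psi\|_{L^2}$. This is exactly the combination that appears on the left-hand side of the Poincaré-type inequality \eqref{eq:Poincare}. Applying that proposition yields
\begin{equation*}
\|\psi\|_{H^1(M)}^2 \leq C_P \int_M \left( |\nabla_x \psi|^2 + a |\psi|^2 \right) \ud x \leq 2 C_P \|f\|_{L^2} \|\psi\|_{L^2}.
\end{equation*}
Since $\|\psi\|_{L^2} \leq \|\psi\|_{H^1}$, I can cancel one factor of $\|\psi\|_{L^2}$ (treating the trivial case $\psi = 0$ separately) to conclude $\|\psi\|_{L^2} \leq \|\psi\|_{H^1} \leq 2 C_P \|f\|_{L^2}$, which is the claimed estimate.

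The crucial structural observation — and the reason this regime needs no exponential factor, unlike the $\tau < 0$ case handled via Proposition \ref{prop:Helmholtz} — is the sign of $\tau$: for $\tau \geq 0$ the zeroth-order term reinforces the gradient term rather than competing with it, so one never enters a genuinely oscillatory (hyperbolic) frequency window and the uniform Poincaré-type inequality suffices without any spectral inequality. I do not expect a serious obstacle here; the only mild care needed is fixing sign conventions consistently in the real/imaginary part computation (the sign of $i$ in \eqref{eq:Helmholtz Schrodinger} must be tracked so that the damping term $\int_M a|\psi|^2$ comes out with a controllable sign) and the harmless cancellation of $\|\psi\|_{L^2}$, which is justified since for $f = 0$ the bound is immediate and for $f \neq 0$ one has $\psi \neq 0$.
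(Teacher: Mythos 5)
Your proposal is correct and follows essentially the same route as the paper: both multiply the Helmholtz equation \eqref{eq:Helmholtz Schrodinger} by $\overline{\psi}$, exploit the favourable sign of $\tau\int_M|\psi|^2$ for $\tau\geq 0$, control $\int_M\bigl(|\nabla_x\psi|^2 + a|\psi|^2\bigr)\ud x$ by $\|f\|_{L^2}\|\psi\|_{L^2}$ (the paper via a lower bound on the modulus of the left-hand side, you via separate real/imaginary parts — the same mechanism up to a factor bookkeeping), and conclude with the Poincar\'e-type inequality \eqref{eq:Poincare} and cancellation of $\|\psi\|_{H^1}$. The constant $2C_P$ comes out identically in both arguments.
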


\begin{proof}

For $f \in L^2(M)$ given, let $\psi = (A - \tau + ia)^{-1}f$. Recalling that $\psi$ satisfies the Helmholtz equation (\ref{eq:Helmholtz Schrodinger}), 
after multiplying by $\overline{\psi}$ and integrating by parts, we get
\begin{equation}\label{ert}
-\int_M \vert \nabla \psi \vert^2 \ud x - \tau \int_M \vert \psi \vert^2 \ud x + i  \int_M a(x) \vert \psi \vert^2 \ud x  = - i \int_M f \overline{\psi} \ud x.
\end{equation}
The modulus of the left hand side in~\eqref{ert} is larger that 
$$ \frac 1 2 \int_M (\vert \nabla \psi \vert^2 (x) + a(x) |\psi|^2(x) ) dx.$$  
 Using Poincaré's inequality (\ref{eq:Poincare}) in the left and Cauchy-Schwarz in the right we get
$$
 \|\psi\|^2 _{H^1} \leq 2C_P \| f\|_{L^2} \| \psi\|_{L^2} \Rightarrow \| \psi\| _{H^1} \leq 2C_P \| f\|_{L^2}.
 $$

\end{proof}

\subsection{Conclusion of the proof of Theorem \ref{energy decay Schrodinger}}

\begin{proof}[Proof of Theorem \ref{energy decay Schrodinger}]
Combining Proposition \ref{prop:Schrodinger positive} and Proposition \ref{prop:Schrodinger negative}, we find that the estimate 
\begin{equation*}
\| R(\tau)f \|_{L^2(M)} \leq  C_0  e^{2 D\sqrt{|\tau|}}  \| f \|_{L^2}
\end{equation*} holds for every $\tau \in \RR$, every $f \in L^2(M)$ and the constant
\begin{equation*}
C_0 := \max  \left\{  2 (1 + C)^2 \left(  1  +   \beta  (1 + \frac{1}{\alpha} )^2   \right),    2C_P   \right\}.
\end{equation*} As a consequence the function $M$ defined by (\ref{eq:function M}) satisfies in this case the growth
\begin{equation*}
M(\mu)  \leq C_1 e^{2 D\sqrt{|\mu|}},   
\end{equation*} for every $\mu \in \RR$. Hence, Theorem \ref{thm:Burq} yields that for any $k \in \NN$, 
\begin{equation}
\left\|  U(t) (Id - A)^{-k}  \right\|_{\mathcal{L}(L^2)} \leq \frac{C_k}{\log(2 + t)^{2k}}, \qquad \forall t \geq 0.
\label{eq:decay Schrodinger en norme}
\end{equation} where we have used the notation of (\ref{eq:operator Schrodinger}) and (\ref{eq:semigrooup Schrodinger}). Next, let $k \in \NN$ be fixed and let $\psi_0 \in H^{2k}(M)$ Then, $\Phi_0 := (Id - A)^{k} \psi_0 \in L^2(M)$ and inequality (\ref{eq:decay Schrodinger en norme}) implies 
\begin{align*}
\mathcal{E}_S(t,\psi_0) & = \left\|  U(t) \psi_0  \right\|^2_{L^2} \\
& = \left\|  U(t) (Id - A)^{-k} \Phi_0  \right\|^2_{L^2} \\
& \leq \frac{C_k}{\log(2 + t)^{4k}}  \left\| \Phi_0 \right\|^2_{L^2} \\
& \leq \frac{C_k}{\log(2 + t)^{4k}}  \left\| (Id - A)^{k} \psi_0 \right\|^2_{L^2},
\end{align*} for every $t \geq 0$. 
 The proof is completed by noticing that for $k=1$ the domain of $A$ is $H^2(M)\cap H^1_0 (M) $ with Dirichlet boundary conditions  (resp. $H^2(M)$ with Neumann boundary conditions or if $ \partial M = \emptyset$)   and  
$$\| (Id - A) \psi_0 \|^2_{L^2} \leq C \|  \psi_0 \|^2_{H^2 }.$$
\end{proof}

\section*{Acknoledgments} 
N. Burq is  partially supported by the grant   "ISDEEC'' ANR-16-CE40-0013 and Institut Universitaire de France.
I. Moyano is very grateful to AIMS Ghana, where part of this work was done during a visit in August 2021.

\bibliographystyle{plain}                            
{}

\end{document}